\newtheorem{thm}{Theorem}[section]
\newtheorem{prop}[thm]{Proposition}
\newtheorem{lem}[thm]{Lemma}
\newtheorem{cor}[thm]{Corollary}
\newtheorem{fact}[thm]{Fact}
\newtheorem{rem}[thm]{Remark}
\numberwithin{equation}{section}
\begin{document}

\title{An affine Weyl group action on the basic hypergeometric series arising from the $q$-Garnier system}
\date{}
\author{Taiki Idomoto}
\address{Graduate School of Science and Engineering, Kindai University, 3-4-1, Kowakae, Higashi-Osaka, Osaka 577-8502, Japan}
\author{Takao Suzuki}
\address{Department of Mathematics, Kindai University, 3-4-1, Kowakae, Higashi-Osaka, Osaka 577-8502, Japan}
\email{suzuki@math.kindai.ac.jp}

\maketitle

\begin{abstract}

Recently, we formulated the $q$-Garnier system and its variations as translations of an extended affine Weyl group of type $A^{(1)}_{2n+1}\times A^{(1)}_1\times A^{(1)}_1$.
On the other hand, those systems admit particular solutions in terms of the basic hypergeometric series ${}_{n+1}\phi_n$.
In this article, we investigate an action of the extended affine Weyl group on ${}_{n+1}\phi_n$.

Key Words: Affine Weyl group, Basic hypergeometric series, Discrete Painlev\'{e} equation, Garnier system.

2010 Mathematics Subject Classification: 17B80, 33D15, 34M56, 39A13.
\end{abstract}

\section{Introduction}

The Garnier system is an extension of the sixth Painlev\'e equation in several variables; see \cite{G,IKSY}.
Its $q$-difference analogue was proposed as the connection preserving deformation of a linear $q$-difference system in \cite{Sak1}.
Afterward, the $q$-Garnier system was investigated by the Pad\'e method in \cite{NY1,NY2}.
Then we obtained other directions of discrete time evolutions called variations.
In recent works \cite{OS1,OS2,Suz2}, we clarified that the $q$-Garnier system and all variations were given as elements called translations of an extended affine Weyl group of type $A^{(1)}_{2n+1}\times A^{(1)}_1\times A^{(1)}_1$.

The Garnier system admits a particular solution in terms of the Lauricella series $F_D$; see \cite{G,IKSY}.
Particular solutions of the $q$-Garnier system and one variation were investigated in \cite{NY1,Sak1,Suz1}.
As a result, two types of $q$-hypergeometric series appeared: the basic hypergeometric series ${}_{n+1}\phi_n$ and the $q$-Lauricella series $\phi^{(n)}_D$.
This fact presents two next problems.
One is particular solutions of the other variations and another is the action of the extended affine Weyl group on ${}_{n+1}\phi_n$.
In this article we give an answer to these problems.

The basic hypergeometric series is a power series
\[
	{}_{n+1}\phi_n\left[\begin{array}{c}a_1,\ldots,a_{n+1}\\b_1,\ldots,b_n\end{array};t\,\right] = \sum_{k=0}^{\infty}\frac{\prod_{l=1}^{n+1}(a_l;q)_k}{(q;q)_k\prod_{l=1}^{n}(b_l;q)_k}\,t^k,
\]
which converges for $|q|<1$ and $|t|<1$.
Here the symbol $(a;q)_k$ is the $q$-shifted factorial defined by
\[
	(a;q)_k = \prod_{l=1}^{k}(1-q^{l-1}a).
\]
It is known that ${}_{n+1}\phi_n$ satisfies various relations; see \cite{GR}.
For example, we have $q$-contiguity relations
\[
	\frac{1-a_l\,T_{q,t}}{1-a_l}\,{}_{n+1}\phi_n\left[\begin{array}{c}a_1,\ldots,a_{n+1}\\b_1,\ldots,b_n\end{array};t\,\right] = {}_{n+1}\phi_n\left[\begin{array}{c}a_1,\ldots,a_{l-1},q\,a_l,a_{l+1},\ldots,a_{n+1}\\b_1,\ldots,b_n\end{array};t\,\right],
\]
for $l=1,\ldots,n+1$,
\[
	\frac{1-q^{-1}b_l\,T_{q,t}}{1-q^{-1}b_l}\,{}_{n+1}\phi_n\left[\begin{array}{c}a_1,\ldots,a_{n+1}\\b_1,\ldots,b_n\end{array};t\,\right] = {}_{n+1}\phi_n\left[\begin{array}{c}a_1,\ldots,a_{n+1}\\b_1,\ldots,b_{l-1},q^{-1}b_l,b_{l+1},\ldots,b_n\end{array};t\,\right],
\]
for $l=1,\ldots,n$ and
\[
	\frac{\prod_{l=1}^{n}(1-b_l)}{t\prod_{l=1}^{n+1}(1-a_l)}\,(1-T_{q,t})\,{}_{n+1}\phi_n\left[\begin{array}{c}a_1,\ldots,a_{n+1}\\b_1,\ldots,b_n\end{array};t\,\right] = {}_{n+1}\phi_n\left[\begin{array}{c}q\,a_1,\ldots,q\,a_{n+1}\\q\,b_1,\ldots,q\,b_n\end{array};t\,\right],
\]
where $T_{q,t}$ is a $q$-shift operator satisfying $T_{q,t}\,x(t)=x(qt)$.
We also obtain an $(n+1)$-th order linear $q$-difference equation for $x={}_{n+1}\phi_n$
\[
	\left\{(1-T_{q,t})(1-q^{-1}b_1T_{q,t})\ldots(1-q^{-1}b_n\,T_{q,t})-t\,(1-a_1T_{q,t})\ldots(1-a_{n+1}T_{q,t})\right\}x = 0,
\]
from the $q$-contiguity relations.
In this article we derive these relations from the extended affine Weyl group systematically.

We state an outline of this article.
In Section \ref{Sec:q-Gar}, we recall the definition of the $q$-Garnier system and the derivation of its particular solution.
We first formulate a group of birational transformations on a field of rational functions, which is named $\widetilde{G}$.
This group is isomorphic to an extended affine Weyl group of type $A^{(1)}_{2n+1}\times A^{(1)}_1\times A^{(1)}_1$ and contains an abelian normal subgroup generated by translations.
In the following, we call this subgroup the $q$-Garnier system.
We next consider a direction of the $q$-Garnier system.
Via a certain specialization, it is reduced to a higher order generalization of the $q$-Riccati equation whose solution is described in terms of ${}_{n+1}\phi_n$.

The results of this article are given in the following sections.
In Section \ref{Sec:Par_Sol}, we find elements of $\widetilde{G}$ which is consistent with the specialization.
Those elements generate a subgroup of $\widetilde{G}$, which is named $\widetilde{F}$.
The group $\widetilde{F}$ is isomorphic to an extended affine Weyl group of type $A^{(1)}_n\times A^{(1)}_n$ and provides particular solutions of some directions of the $q$-Garnier system.
In Section \ref{Sec:Wey_HGF}, we formulate $(n+1)\times(n+1)$ matrices corresponding to the generators of $\widetilde{F}$.
Those matrices give a left action of $\widetilde{F}$ on an $(n+1)$-dimensional vector whose components are described in terms of ${}_{n+1}\phi_n$.
In Section \ref{Sec:q-HGE}, we formulate translations of $\widetilde{F}$ in a framework of $(n+1)\times(n+1)$ matrices.
Then we obtain the $q$-contiguity relations and the linear $q$-difference equation for ${}_{n+1}\phi_n$ again.

\begin{rem}
In this article we identify the $q$-Lauricella series
\[
	\phi^{(n)}_D\left[\begin{array}{c}a,b_1,\ldots,b_n\\c\end{array};t_1,\ldots,t_n\,\right] = \sum_{k_1,\ldots,k_n\geq0}\frac{(a;q)_{k_1+\ldots+k_n}\prod_{l=1}^{n}(b_l;q)_{k_l}}{(c;q)_{k_1+\ldots+k_n}\prod_{l=1}^{n}(q;q)_{k_l}}\prod_{l=1}^{n}a_l^{k_l},
\]
with ${}_{n+1}\phi_n$.
As a matter of fact, we have a transformation formula given in \cite{A}
\[
	\phi^{(n)}_D\left[\begin{array}{c}t,a_1^{-1}b_1,\ldots,a_n^{-1}b_n\\a_{n+1}\,t\end{array};a_1,\ldots,a_n\,\right] = \frac{(t;q)_{\infty}\prod_{l=1}^{n}(b_l;q)_{\infty}}{(a_{n+1}\,t;q)_{\infty}\prod_{l=1}^{n}(a_l;q)_{\infty}}\,{}_{n+1}\phi_n\left[\begin{array}{c}a_1,\ldots,a_{n+1}\\b_1,\ldots,b_n\end{array};t\,\right].
\]
\end{rem}

\begin{rem}
We formulated a birational representation of an extended affine Weyl group of type $A^{(1)}_{mn-1}\times A^{(1)}_{m-1}\times A^{(1)}_{m-1}$ as a generalization of the $q$-Garnier system in \cite{OS1,Suz2}.
We also investigated its $q$-hypergeometric solution for the case of $(m,n)=(3,2)$.
On the other hand, another generalization of the $q$-Garnier system was proposed together with its $q$-hypergeometric solution in \cite{P1,P2}.
We haven't clarified affine Weyl group symmetries of those $q$-hypergeometric solutions yet.
It's a future problem.
\end{rem}

\section{The $q$-Garnier system and the basic hypergeometric series}\label{Sec:q-Gar}

\subsection{The $q$-Garnier system}

Let $\varphi_{j,i}$ $(j\in\mathbb{Z}_{2n+2},i\in\mathbb{Z}_2)$ be dependent variables and $\alpha_j,\beta_i,\beta'_i$ $(j\in\mathbb{Z}_{2n+2},i\in\mathbb{Z}_2)$ parameters defined by
\[
	\alpha_j = \varphi_{j,0}\,\varphi_{j,1},\quad
	\beta_i = \prod_{j=0}^{2n+1}\varphi_{j,i},\quad
	\beta'_i = \prod_{j=0}^{2n+1}\varphi_{j,i+j}.
\]
We also set
\[
	q = \prod_{j=0}^{2n+1}\varphi_{j,0}\,\varphi_{j,1} = \prod_{j=0}^{2n+1}\alpha_j = \beta_0\,\beta_1 = \beta'_0\,\beta'_1.
\]

We first define birational transformations $r_j,s_i,s'_i$ $(j\in\mathbb{Z}_{2n+2},i\in\mathbb{Z}_2)$ by
\begin{align*}
	&r_j(\varphi_{j-1,i}) = \varphi_{j-1,i}\,\varphi_{j,i+1}\,\frac{1+\varphi_{j,i}}{1+\varphi_{j,i+1}},\quad
	r_j(\varphi_{j,i}) = \frac{1}{\varphi_{j,i+1}},\quad
	r_j(\varphi_{j+1,i}) = \varphi_{j,i}\,\varphi_{j+1,i}\,\frac{1+\varphi_{j,i+1}}{1+\varphi_{j,i}}, \\
	&r_j(\varphi_{k,i}) = \varphi_{k,i}\quad (k\neq j,j\pm1), \\
	&s_i(\varphi_{j,i}) = \frac{1}{\varphi_{j+1,i}}\frac{Q_{j,i}}{Q_{j+2,i}},\quad
	s_i(\varphi_{j,i+1}) = \varphi_{j,i}\,\varphi_{j,i+1}\,\varphi_{j+1,i}\,\frac{Q_{j+2,i}}{Q_{j,i}}, \\
	&s'_i(\varphi'_{j,i}) = \frac{1}{\varphi'_{j+1,i}}\frac{Q'_{j,i}}{Q'_{j+2,i}},\quad
	s'_i(\varphi'_{j,i+1}) = \varphi'_{j,i}\,\varphi'_{j,i+1}\,\varphi'_{j+1,i}\,\frac{Q'_{j+2,i}}{Q'_{j,i}},
\end{align*}
where
\[
	\varphi'_{j,i}=\varphi_{-j,i-j},\quad
	Q_{j,i} = \sum_{k=0}^{2n+1}\prod_{l=0}^{k-1}\varphi_{j+l,i},\quad
	Q'_{j,i} = \sum_{k=0}^{2n+1}\prod_{l=0}^{k-1}\varphi'_{j+l,i}.
\]
Then they act on the parameters as
\begin{align*}
	&r_j(\alpha_j) = \frac{1}{\alpha_j},\quad
	r_j(\alpha_{j\pm1}) = \alpha_{j\pm1}\,\alpha_j,\quad
	r_j(\alpha_k) = \alpha_k\quad (k\neq j,j\pm1),\quad
	r_j(\beta_i) = \beta_i,\quad
	r_j(\beta'_i) = \beta'_i, \\
	&s_i(\alpha_j) = \alpha_j,\quad
	s_i(\beta_i) = \frac{1}{\beta_i},\quad
	s_i(\beta_{i+1}) = \beta_{i+1}\,\beta_i^2,\quad
	s_i(\beta'_i) = \beta'_i,\quad
	s_i(\beta'_{i+1}) = \beta'_{i+1}, \\
	&s'_i(\alpha_j) = \alpha_j,\quad
	s'_i(\beta_i) = \beta_i,\quad
	s'_i(\beta_{i+1}) = \beta_{i+1},\quad
	s'_i(\beta'_i) = \frac{1}{\beta'_i},\quad
	s'_i(\beta'_{i+1}) = \beta'_{i+1}(\beta'_i)^2.
\end{align*}

\begin{fact}[{\cite{IIO,MOT,OS2}}]
If we set
\[
	G = \langle r_0,\ldots,r_{2n+1}\rangle,\quad
	H = \langle s_0,s_1\rangle,\quad
	H' = \langle s'_0,s'_1\rangle,
\]
then the groups $G,$ $H$ and $H'$ are isomorphic to the affine Weyl groups of type $A^{(1)}_{2n+1}$, $A^{(1)}_1$ and $A^{(1)}_1$ respectively.
Moreover, any two groups are mutually commutative.
\end{fact}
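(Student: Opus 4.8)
The plan is to verify that the birational transformations satisfy exactly the defining Coxeter relations of the three stated affine Weyl groups, and then to upgrade the resulting surjections to isomorphisms by establishing faithfulness. First I would check by direct substitution on the field generators $\varphi_{j,i}$ that every generator is an involution: $r_j^2 = s_i^2 = (s'_i)^2 = \mathrm{id}$. For $G$ I would then verify the two remaining families of relations for type $A^{(1)}_{2n+1}$, namely the braid relations $(r_j r_{j+1})^3 = \mathrm{id}$ between cyclically adjacent generators (indices in $\mathbb{Z}_{2n+2}$) and the commutations $(r_j r_k)^2 = \mathrm{id}$ whenever $k \neq j, j\pm 1$. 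Because $r_j$ alters only $\varphi_{j-1,i}, \varphi_{j,i}, \varphi_{j+1,i}$, the commutation relations are immediate once the supports are disjoint, and only the braid relation requires genuine computation. For $H$ and $H'$ the Dynkin diagram has no edge, so after the involutivity of $s_0,s_1$ (resp.\ $s'_0, s'_1$) there are no finite braid relations to impose. This yields surjective homomorphisms $W(A^{(1)}_{2n+1}) \to G$, $W(A^{(1)}_1) \to H$, and $W(A^{(1)}_1) \to H'$.

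To make each surjection an isomorphism I would exhibit faithfulness through the induced linear action on parameters. Writing $a_j = \log\alpha_j$, $b_i = \log\beta_i$, $b'_i = \log\beta'_i$, the tabulated action on $\alpha_j, \beta_i, \beta'_i$ becomes the linear action $a_j \mapsto -a_j$, $a_{j\pm1}\mapsto a_{j\pm1}+a_j$, $a_k\mapsto a_k$ for $r_j$, and $b_i \mapsto -b_i$, $b_{i+1}\mapsto b_{i+1}+2b_i$ for $s_i$ (and likewise for $s'_i$). These are precisely the geometric (reflection) representations associated to the Cartan matrices of $A^{(1)}_{2n+1}$ and $A^{(1)}_1$ on their root lattices; the invariants $\prod\alpha_j = q$ and $\beta_0\beta_1 = \beta'_0\beta'_1 = q$ correspond to the Weyl-invariance of the null root $\delta$. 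By Tits's theorem the geometric representation of any Coxeter group is faithful, so the three kernels are trivial and the isomorphisms follow. In particular $s_0 s_1$ (and $s'_0 s'_1$) has infinite order, which is exactly the $A^{(1)}_1$ structure.

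Finally, for the mutual commutativity of $G$, $H$, $H'$ I would check the pairwise commutators $[r_j, s_i]$, $[r_j, s'_i]$ and $[s_i, s'_{i'}]$ directly on the generators $\varphi_{j,i}$, using $\varphi'_{j,i} = \varphi_{-j,i-j}$ to translate between the primed and unprimed descriptions. The main obstacle throughout lies in the relations that involve $s_i$ and $s'_i$: these transformations are built from the rational sums $Q_{j,i}$ and $Q'_{j,i}$, so verifying both the $A^{(1)}_1$ structure and the commutations $[s_i, s'_{i'}] = \mathrm{id}$ requires carefully tracking how the cyclic index shift and the substitution $\varphi'_{j,i} = \varphi_{-j,i-j}$ intertwine the two families. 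By contrast, the relations internal to $G$ and all commutators with $G$ reduce to short local computations, since each $r_j$ touches only three adjacent variables.
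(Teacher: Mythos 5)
This statement is imported by the paper as a \emph{Fact} with citations to \cite{IIO,MOT,OS2}; the paper itself contains no proof of it. In those references the relations are obtained by realizing $r_j,s_i,s'_i$ as compositions of mutations in a cluster algebra attached to a quiver, so that the Coxeter and commutation relations follow from general mutation combinatorics. Your route --- verify the Coxeter relations by direct substitution, then upgrade the surjections $W\to G,H,H'$ to isomorphisms by observing that the induced action on $(\log\alpha_j)$, $(\log\beta_i)$, $(\log\beta'_i)$ is exactly the geometric (root-lattice) representation, which is faithful by Tits's theorem --- is a legitimate and genuinely different argument, and the faithfulness step is exactly the right way to rule out extra relations (in particular it correctly forces $s_0s_1$ to have infinite order, which a purely relation-checking argument cannot give). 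What the cluster-algebra route buys is that the messy rational identities come for free from mutation periodicity; what your route buys is self-containedness.

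Two caveats. First, your claim that the commutations $(r_jr_k)^2=\mathrm{id}$ for $k\neq j,j\pm1$ are ``immediate once the supports are disjoint'' is not quite right: for $k=j\pm2$ the supports overlap in one variable (both $r_j$ and $r_{j+2}$ modify $\varphi_{j+1,i}$), so this case still needs a (short) computation; one checks that $r_j(\varphi_{j+1,i})$ depends only on $\varphi_{j,\ast},\varphi_{j+1,\ast}$ while $r_{j+2}(\varphi_{j+1,i})$ depends only on $\varphi_{j+1,\ast},\varphi_{j+2,\ast}$, and the two multiplicative factors commute. Second, the genuinely hard content --- the braid relations in $G$, the involutivity of $s_i,s'_i$, and especially the commutators $[r_j,s_i]$, $[r_j,s'_i]$, $[s_i,s'_{i'}]$, all of which involve the sums $Q_{j,i}$, $Q'_{j,i}$ --- is only announced, not carried out; you correctly identify these as the main obstacle, but as written the proposal is a proof plan rather than a proof. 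Neither point is a structural flaw in the approach.
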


Recall that the affine Weyl group of type $A^{(1)}_{n-1}$ is generated by the generators $r_i$ $(i\in\mathbb{Z}_n)$ and the fundamental relations
\[
	r_0^2 = r_1^2 = 1,
\]
for $n=2$ and
\[
	r_i^2 = 1,\quad
	r_i\,r_j\,r_i = r_j\,r_i\,r_j\quad (|i-j|=1),\quad
	r_i\,r_j = r_j\,r_i\quad (|i-j|>1),
\]
for $n\geq3$.
Note that we interpret a composition of transformations in terms of automorphisms of the field of rational functions $\mathbb{C}(\varphi_{j,i})$.
For example, the compositions $r_0\,r_1$ act on the parameter $\alpha_0$ as
\[
	r_0\,r_1(\alpha_0) = r_0(\alpha_0\,\alpha_1) = r_0(\alpha_0)\,r_0(\alpha_1) = \alpha_1.
\]

We next define birational transformations $\pi,\pi',\rho$ by
\[
	\pi(\varphi_{j,i}) = \varphi_{j+1,i+1},\quad
	\pi'(\varphi_{j,i}) = \varphi_{j+1,i},\quad
	\rho(\varphi_{j,i}) = \varphi'_{j,i}.
\]
Then they act on the parameters as
\begin{align*}
	&\pi(\alpha_j) = \alpha_{j+1},\quad
	\pi(\beta_i) = \beta_{i+1},\quad
	\pi(\beta'_i) = \beta'_i, \\
	&\pi'(\alpha_j) = \alpha_{j+1},\quad
	\pi'(\beta_i) = \beta_i,\quad
	\pi'(\beta'_i) = \beta'_{i-1}, \\
	&\rho(\alpha_j) = \alpha_{-j},\quad
	\rho(\beta_i) = \beta'_i,\quad
	\rho(\beta'_i) = \beta_i.
\end{align*}

\begin{fact}[{\cite{OS2}}]
The transformations $\pi,\pi',\rho$ satisfy relations
\begin{align*}
	&\pi^{2n+2} = 1,\quad
	\pi^2 = (\pi')^2,\quad
	\pi\,\pi' = \pi'\pi,\quad
	\rho^2 = 1,\quad
	\pi\,\rho = \rho\,(\pi')^{-1}, \\
	&r_j\,\pi = \pi\,r_{j-1},\quad
	s_i\,\pi = \pi\,s_{i-1},\quad
	s'_i\,\pi = \pi\,s'_i,\quad
	r_j\,\pi' = \pi'r_{j-1},\quad
	s_i\,\pi' = \pi's_i,\quad
	s'_i\,\pi' = \pi's'_{i+1}, \\
	&r_j\,\rho = \rho\,r_{-j},\quad
	s_i\,\rho = \rho\,s'_i,
\end{align*}
for $j\in\mathbb{Z}_{2n+2}$ and $i\in\mathbb{Z}_2$.
\end{fact}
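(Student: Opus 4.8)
The plan is to use that every transformation in the statement is a $\mathbb{C}$-algebra automorphism of $\mathbb{C}(\varphi_{j,i})$, hence is completely determined by its values on the generators $\varphi_{j,i}$ $(j\in\mathbb{Z}_{2n+2},i\in\mathbb{Z}_2)$. Consequently an identity $w_1=w_2$ between two words holds as soon as $w_1(\varphi_{j,i})=w_2(\varphi_{j,i})$ for every $j$ and $i$, where composition is read as ordinary composition of automorphisms (rightmost first), exactly as in the worked example $r_0r_1(\alpha_0)=r_0(\alpha_0\alpha_1)=\alpha_1$. Thus the whole statement reduces to a finite list of generator-wise verifications, and the only content is to organize them so that the computations stay uniform in $j$ and $i$.

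First I would dispose of the relations in the top line, which involve only $\pi,\pi',\rho$. These three maps merely relabel the generators: $\pi$ sends the index $(j,i)$ to $(j+1,i+1)$, $\pi'$ sends $(j,i)$ to $(j+1,i)$, and $\rho$ sends $(j,i)$ to $(-j,i-j)$. Each is an affine bijection of $\mathbb{Z}_{2n+2}\times\mathbb{Z}_2$, so every relation among them --- $\pi^{2n+2}=1$, $\pi^2=(\pi')^2$, $\pi\pi'=\pi'\pi$, $\rho^2=1$ and $\pi\rho=\rho(\pi')^{-1}$ --- becomes a trivial identity between the corresponding affine maps on indices (which compose in the same order as the automorphisms). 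For instance $\pi^{2n+2}$ shifts $(j,i)$ by $(2n+2,2n+2)\equiv(0,0)$, and $\rho^2$ sends $(j,i)\mapsto(-j,i-j)\mapsto(j,i)$; the only point needing attention is that the second index lives in $\mathbb{Z}_2$ while the first lives in $\mathbb{Z}_{2n+2}$, so that, e.g., $2n+2\equiv0$ in both.

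Next come the conjugation relations in the middle and bottom lines, where the mechanism is covariance: the reflections $r_j$, $s_i$, $s'_i$ are given by template formulas depending on the index only through fixed shifts, and the diagram automorphisms permute those shifts. The key auxiliary step is to record how the sums transform. Since $Q_{j,i}$ and $Q'_{j,i}$ are sums of products of generators and $\pi,\pi',\rho$ are automorphisms, one checks at once that $\pi(Q_{j,i})=Q_{j+1,i+1}$, $\pi'(Q_{j,i})=Q_{j+1,i}$ and $\rho(Q_{j,i})=Q'_{j,i}$, together with the analogous formulas for $Q'_{j,i}$ and the inverse statement $\rho(Q'_{j,i})=Q_{j,i}$. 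Granting these, a relation such as $s_i\pi=\pi s_{i-1}$ is verified by applying both sides to $\varphi_{k,i}$ and to $\varphi_{k,i+1}$ (the two classes exhausting all generators, since $i\in\mathbb{Z}_2$) and substituting the template formulas; the $Q$-covariance makes the two sides coincide shift by shift, and the $s'$-relations use the $Q'$-covariance in the same way. The relations $r_j\pi=\pi r_{j-1}$ and $r_j\pi'=\pi'r_{j-1}$ are identical in spirit but without any $Q$, using only $r_j(\varphi_{j,i})=1/\varphi_{j,i+1}$ and its neighbours.

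The step I expect to be the main obstacle is the block of relations involving $\rho$, namely $r_j\rho=\rho r_{-j}$ and $s_i\rho=\rho s'_i$. The difficulty is bookkeeping rather than conceptual: because $\rho$ acts by $\varphi_{j,i}\mapsto\varphi'_{j,i}=\varphi_{-j,i-j}$, it reverses the first index and simultaneously shifts the second by $-j$, so the neighbouring indices $j\pm1$ occurring in the definition of $r_j$ are sent to $-j\mp1$ with correspondingly shifted second indices, and one must check that this reproduces the definition of $r_{-j}$ exactly. For $s_i\rho=\rho s'_i$ I would exploit that $\rho$ is an involution interchanging the two families, so that $\rho(\varphi'_{j,i})=\varphi_{j,i}$ and $\rho(Q'_{j,i})=Q_{j,i}$; applying $\rho s_i\rho$ to $\varphi'_{k,l}$ then turns the defining formula for $s_i$ term by term into the defining formula for $s'_i$, which is precisely how $s'_i$ was introduced. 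Once the $\rho$-covariance of the $\varphi$'s and of the $Q$'s is in hand, these relations fall out like the others, so the real work is confined to verifying that covariance carefully in the presence of the double index shift.
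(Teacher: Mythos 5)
The paper does not actually prove this Fact---it is imported verbatim from \cite{OS2}---so there is no in-paper argument to compare your proposal against. Your plan is nonetheless a correct and complete strategy for a direct verification: the reduction to generators of $\mathbb{C}(\varphi_{j,i})$, the index-relabelling argument for the relations among $\pi,\pi',\rho$, and the covariances $\pi(Q_{j,i})=Q_{j+1,i+1}$, $\pi'(Q_{j,i})=Q_{j+1,i}$, $\rho(Q_{j,i})=Q'_{j,i}$, $\rho(Q'_{j,i})=Q_{j,i}$ (together with $\pi(\varphi'_{j,i})=\varphi'_{j-1,i}$ and $\pi'(\varphi'_{j,i})=\varphi'_{j-1,i-1}$, which is what makes $s'_i\,\pi=\pi\,s'_i$ and $s'_i\,\pi'=\pi's'_{i+1}$ come out with the stated index shifts) all check out, and the $\rho$-bookkeeping you flag as the main obstacle does close, the only point of care being that the second-index shift $(j,i)\mapsto(-j,i-j)$ is well defined on $\mathbb{Z}_{2n+2}\times\mathbb{Z}_2$ precisely because $2n+2$ is even.
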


A semi-direct product $\widetilde{G}=\langle G,H,H'\rangle\rtimes\langle\pi,\pi',\rho\rangle$ can be regarded as an extended affine Weyl group of type $A_{2n+1}^{(1)}\times A_1^{(1)}\times A_1^{(1)}$.
Moreover, this group contains an abelian normal subgroup generated by translations; see \cite{OS2}.
In this article we call this subgroup the $q$-Garnier system.

\subsection{The basic hypergeometric series}

Consider a translation
\[
	\tau_c = (\pi')^{-1}\pi\,s'_0\,s_0.
\]
as a direction of the $q$-Garnier system.
It acts on the parameters as
\[
	\tau_c(\alpha_j) = \alpha_j,\quad
	\tau_c(\beta_0) = q^{-1}\beta_0,\quad
	\tau_c(\beta_1) = q\,\beta_1,\quad
	\tau_c(\beta'_0) = q^{-1}\beta'_0,\quad
	\tau_c(\beta'_1) = q\,\beta'_1.
\]

\begin{fact}[\cite{OS1,Suz1}]\label{Fac:q-Ric}
The action of $\tau_c$ on the dependent variables is consistent with a condition
\begin{equation}\label{Eq:Par_Sol}
	\varphi_{2j+1,0} = -\alpha_{2j+1},\quad
	\varphi_{2j+1,1} = -1\quad (j=0,\ldots,n),
\end{equation}
namely
\[
	\tau_c(\varphi_{2j+1,0})\bigm|_{Eq. \eqref{Eq:Par_Sol}} = -\alpha_{2j+1},\quad
	\tau_c(\varphi_{2j+1,1})\bigm|_{Eq. \eqref{Eq:Par_Sol}} = -1.
\]
Then we obtain a system of non-linear $q$-difference equations
\begin{equation}\label{Eq:q-Ric}
	\tau_c(\varphi_{2j,0}) = \frac{\varphi_{2j,0}}{\alpha_{2j-1}\,\alpha_{2j}}\frac{T_{j+1}}{T_j},\quad (j=0,\ldots,n),
\end{equation}
where
\[
	T_j = \prod_{l=0}^{n-1}\alpha_{2j+2l+1}\prod_{l=0}^{n}\alpha_{2l} + \sum_{k=1}^{n}(-1)^k(1-\alpha_{2j+2k-1})\prod_{l=k}^{n-1}\alpha_{2j+2l+1}\prod_{l=0}^{k-1}\varphi_{2l,0}\prod_{l=k}^{n}\alpha_{2l} + (-1)^n\prod_{l=0}^{n}\varphi_{2l,0}.
\]
\end{fact}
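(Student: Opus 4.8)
The assertion is computational: once the birational action of $\tau_c=(\pi')^{-1}\pi\,s'_0\,s_0$ on the variables $\varphi_{j,i}$ is written out explicitly, both the consistency claim and the evolution equation \eqref{Eq:q-Ric} reduce to substitution and simplification. The plan is therefore to expand $\tau_c$ as a single birational map, restrict it to the specialization \eqref{Eq:Par_Sol}, and simplify. I would compute the composition from the right, following the convention fixed by the example $r_0\,r_1(\alpha_0)=\alpha_1$, so that $s_0$ is applied first and $(\pi')^{-1}$ last. Because the subgroups $H$ and $H'$ commute, $s_0$ and $s'_0$ may be ordered freely; the outer shifts $\pi$ and $(\pi')^{-1}$ merely relabel indices via $\varphi_{j,i}\mapsto\varphi_{j+1,i+1}$ and $\varphi_{j,i}\mapsto\varphi_{j-1,i}$. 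Hence all the genuine content lies in $s'_0\,s_0$, which introduces the sums $Q_{\bullet,0}$ and $Q'_{\bullet,0}$; the result is a closed rational expression for each $\tau_c(\varphi_{j,i})$.

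For the consistency statement I would impose \eqref{Eq:Par_Sol} and use that $\alpha_{2j+1}$ is $\tau_c$-invariant, so that the two equations $\tau_c(\varphi_{2j+1,0})=-\alpha_{2j+1}$ and $\tau_c(\varphi_{2j+1,1})=-1$ are equivalent through $\alpha_{2j+1}=\varphi_{2j+1,0}\,\varphi_{2j+1,1}$; it then suffices to check the $i=1$ relation. The decisive simplification is that under \eqref{Eq:Par_Sol} the odd-indexed factors become constant: in $Q_{j,0}$ one has $\varphi_{2m+1,0}=-\alpha_{2m+1}$, while in $Q'_{j,0}$, through $\varphi'_{j,i}=\varphi_{-j,i-j}$, the odd-indexed primed variables equal $\varphi_{\mathrm{odd},1}=-1$. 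In both cases every second factor of the defining products is a constant, so the $(2n+2)$-term sums telescope into compact polynomials in the surviving even variables $\varphi_{2m,0}$ and the parameters $\alpha_k$. Substituting these collapsed forms back, I expect the specialized value of $\tau_c(\varphi_{2j+1,1})$ to reduce to $-1$.

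The evolution equation \eqref{Eq:q-Ric} comes from the same expansion applied to the even variables. After inserting the collapsed forms of $Q_{\bullet,0}$ and $Q'_{\bullet,0}$ into $\tau_c(\varphi_{2j,0})$, I would match the outcome with the right-hand side by identifying the polynomial $T_j$ with a suitable shift of the specialized sum; the prefactor $1/(\alpha_{2j-1}\,\alpha_{2j})$ and the ratio $T_{j+1}/T_j$ should then arise from the quotients $Q_{j,0}/Q_{j+2,0}$ (and their primed counterparts) combined with the index shift produced by $(\pi')^{-1}\pi$.

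The main obstacle is exactly this bookkeeping. One must carry the index shifts from $\pi$ and $(\pi')^{-1}$ through the composition while showing that the telescoped sums reorganize precisely into the three-part structure of $T_j$: the leading product $\prod\alpha_{2j+2l+1}\prod\alpha_{2l}$, the alternating middle sum weighted by $1-\alpha_{2j+2k-1}$, and the final term $(-1)^n\prod\varphi_{2l,0}$. Verifying that the collapse yields these exact coefficients, and not merely an expression of the same shape, is the delicate point; once the identification of $T_j$ with the specialized sum is secured, both the invariance of \eqref{Eq:Par_Sol} and the formula \eqref{Eq:q-Ric} follow at once.
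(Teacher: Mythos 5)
Your reading of the statement is right in outline: it is a computation, the collapse of the $Q$-sums under \eqref{Eq:Par_Sol} is the engine, and reducing the two consistency conditions to one via the $\tau_c$-invariance of $\alpha_{2j+1}=\varphi_{2j+1,0}\,\varphi_{2j+1,1}$ is a sound economy. But the central step of your plan does not go through as described. You propose to expand $\tau_c=(\pi')^{-1}\pi\,s'_0\,s_0$ in one shot and then simplify both $Q_{j,0}$ (from $s_0$) and $Q'_{j,0}$ (from $s'_0$) by substituting the locus values $\varphi_{2m+1,0}=-\alpha_{2m+1}$ and $\varphi'_{\mathrm{odd},0}=-1$. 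That substitution is only legitimate for the factors sitting \emph{outermost} in the composition: with the convention $\omega_2\,\omega_1(x)=\omega_2(\omega_1(x))$, the composite expression is the formula of the innermost factor with each of its variables replaced by its image under the composite of the outer factors, and only that outer composite is restricted to \eqref{Eq:Par_Sol} as a whole. Since no proper sub-word of $(\pi')^{-1}\pi\,s'_0\,s_0$ preserves \eqref{Eq:Par_Sol} --- for instance $(\pi')^{-1}\pi\,s'_0(\varphi_{2m+1,0})\bigm|_{\eqref{Eq:Par_Sol}}$ is a genuine rational function involving a ratio of $Q'$-sums, not $-\alpha_{2m+1}$ --- the arguments fed into the $Q_{j,0}$ appearing in the formula of $s_0$ are not the locus values, and the collapse you rely on simply does not occur there. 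Reordering the word with the commutation relations only transfers the obstruction from one family of sums to the other.

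The missing idea is the factorization used in the paper: $\tau_c=\sigma'\sigma$ with $\sigma=\pi\,s_0\,r_1r_3\cdots r_{2n+1}$ and $\sigma'=(\pi')^{-1}s'_0\,r_0r_2\cdots r_{2n}$, where the commutation relations show that the inserted reflection products cancel, so that $\sigma'\sigma=(\pi')^{-1}\pi\,s'_0\,s_0$. Each half separately preserves \eqref{Eq:Par_Sol} (this is Theorem \ref{Thm:Par_Sol}): for $\sigma$, the inner reflections send $\varphi_{2j+1,1}$ to $1/\varphi_{2j+1,0}$, and the outer composite $\pi\,s_0$ produces the sums $\sum_{k}\prod_{l}\varphi_{2j+2+l,1}$, whose odd-indexed factors are honestly $-1$ on the locus, so each sum telescopes to the $j$-independent quantity $1+(-1)^n\prod\varphi_{2l,\ast}$ and the ratio of consecutive sums is $1$; the argument for $\sigma'$ is analogous. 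Consistency of $\tau_c$ is then automatic, and \eqref{Eq:q-Ric} with its three-part polynomial $T_j$ is obtained by composing the two \emph{restricted} maps $\sigma\bigm|_{\eqref{Eq:Par_Sol}}$ and $\sigma'\bigm|_{\eqref{Eq:Par_Sol}}$ listed in Section \ref{Sec:Par_Sol}, which is legitimate precisely because each half maps the locus to itself. Without some factorization of $\tau_c$ into locus-preserving pieces, the one-shot expansion leaves you with nested $Q$- and $Q'$-sums that do not simplify by the mechanism you invoke.
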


System \eqref{Eq:q-Ric} can be regarded as a higher order $q$-Riccati system.
It has $n+1$ dependent variables $\varphi_{0,0},\varphi_{2,0},\ldots,\varphi_{2n,0}$ and $2n+3$ parameters $\alpha_0,\alpha_1,\ldots,\alpha_{2n+1},\beta_0$ with a constraint
\[
	\prod_{j=0}^{n}\varphi_{2j,0} = \frac{(-1)^{n+1}\beta_0}{\prod_{j=0}^{n}\alpha_{2j+1}}.
\]
Note that condition \eqref{Eq:Par_Sol} implies that between parameters
\begin{equation}\label{Eq:Par_Sol_Par}
	\beta'_0 = \frac{\beta_0}{\prod_{j=0}^{n}\alpha_{2j+1}}.
\end{equation}

Under condition \eqref{Eq:Par_Sol_Par}, we define a $(2n+2)$-tuple of parameters $\mathbf{c}=\left(a_1,\ldots,a_{n+1},b_1,\ldots,b_n,c\right)$ by
\[
	a_j = \prod_{l=2n-2j+1}^{2n-1}\alpha_l,\quad
	b_j = \prod_{l=2n-2j}^{2n-1}\alpha_l,\quad
	c = \frac{\beta_0}{q\prod_{l=0}^{n}\alpha_{2l+1}}.
\]
We also use the notation
\[
	b_{n+1} = q,\quad
	a_{j+n+1} = q\,a_j,\quad
	b_{j+n+1} = q\,b_j,
\]
for a sake of convenience.
The original parameters are expressed in terms of $\mathbf{c}$ as
\[
	\alpha_{2j-2} = \frac{b_{n-j+1}}{a_{n-j+1}},\quad
	\alpha_{2j-1} = \frac{a_{n-j+1}}{b_{n-j}}\quad (j=0,\ldots,n),\quad
	\beta_0 = q\,c\prod_{l=1}^{n+1}\frac{a_l}{b_{l-1}}.
\]
The action of $\tau_c$ on $\mathbf{c}$ is given by
\[
	\tau_c(\mathbf{c}) = \left(a_1,\ldots,a_{n+1},b_1,\ldots,b_n,q^{-1}c\right).
\]
Let $\mathbf{x}=\mathbf{x}(\mathbf{c})$ be an $(n+1)$-dimensional vector defined by
\[
	\mathbf{x} = \begin{bmatrix}x_1\\\vdots\\x_{n+1}\end{bmatrix},\quad
	x_j = \prod_{l=1}^{n-j+1}(1-a_l)\prod_{l=n-j+2}^{n}(1-b_l)\,{}_{n+1}\phi_n\left[\begin{array}{c}q\,a_1,\ldots,q\,a_{n-j+1},a_{n-j+1},\ldots,a_{n+1}\\q\,b_1,\ldots,q\,b_{n-j+1},b_{n-j+2},\ldots,b_n\end{array};q\,c\,\right],
\]

\begin{fact}[\cite{OS1,Suz1}]\label{Fac:q-HGE}
The vector $\mathbf{x}$ satisfies a system of linear $q$-difference equations
\begin{equation}\label{Eq:q-HGE}
	\mathbf{x}(\tau_c(\mathbf{c})) = M_{\tau_c}\,\mathbf{x}(\mathbf{c}),
\end{equation}
with an $(n+1)\times(n+1)$ matrix
\begin{align*}
	(1-c)\,M_{\tau_c} &= \sum_{j=1}^{n+1}(b_{n-j+1}-a_{n-j+2}\,c)\,E_{j,j} \\
	&\quad + \sum_{j_1=1}^{n+1}\sum_{j_2=j_1+1}^{n+1}(b_{n-j_2+1}-a_{n-j_2+2})\,E_{j_1,j_2} + \sum_{j_1=1}^{n+1}\sum_{j_2=1}^{j_1-1}(b_{n-j_2+1}-a_{n-j_2+2})\,c\,E_{j_1,j_2},
\end{align*}
where $E_{j_1,j_2}$ is an $(n+1)\times(n+1)$ matrix with $1$ in $(j_1,j_2)$-th entry and $0$ elsewhere.
\end{fact}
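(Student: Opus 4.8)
The plan is to read (\ref{Eq:q-HGE}) as an identity between two vectors of power series in $c$ and to verify it row by row. Writing each component as $x_j = P_j\sum_{k\ge 0}(qc)^k V_{j,k}$, where $P_j=\prod_{l=1}^{n-j+1}(1-a_l)\prod_{l=n-j+2}^{n}(1-b_l)$ is the prefactor and $V_{j,k}$ the summand, the action of $\tau_c$ (which only sends $c\mapsto q^{-1}c$) amounts to replacing $(qc)^k$ by $c^k$, so that $x_j(\tau_c(\mathbf c))=P_j\sum_k c^k V_{j,k}$. Since $|q|<1$ and $|c|<q^{-1}$ guarantee absolute convergence, the rearrangements below are legitimate. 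The key structural point is that the $x_j$ form a chain of contiguous ${}_{n+1}\phi_n$'s: passing from $x_{j+1}$ to $x_j$ moves exactly one slot $a_{n-j+1},b_{n-j+1}$ across the split between the $q$-shifted and unshifted factors.

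First I would establish the $n$ first-order contiguity relations
\[
 x_j(\tau_c(\mathbf c))-x_{j+1}(\tau_c(\mathbf c))=b_{n-j+1}\,x_j-a_{n-j+1}\,x_{j+1}\qquad(j=1,\dots,n).
\]
These follow from the elementary identity $(qa;q)_k=(1-q^k a)(a;q)_k/(1-a)$, which gives $(1-a_{n-j+1})(1-q^k b_{n-j+1})V_{j,k}=(1-b_{n-j+1})(1-q^k a_{n-j+1})V_{j+1,k}$, together with the prefactor ratio $P_j/P_{j+1}=(1-a_{n-j+1})/(1-b_{n-j+1})$. Multiplying the relation on the summand by $c^k$, summing, and clearing the prefactors yields the display, once the $q$-weighted sums $\sum_k q^k c^k V_{\bullet,k}$ are recognized (up to the prefactor) as the untwisted components $x_\bullet$.

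Next I would observe, by a short computation on the explicit entries, that subtracting row $j+1$ from row $j$ of the claimed equation $(1-c)\mathbf x(\tau_c(\mathbf c))=(1-c)M_{\tau_c}\mathbf x(\mathbf c)$ reproduces precisely $(1-c)$ times the contiguity relation above; the $c$ on the diagonal and the upper/lower split of $M_{\tau_c}$ are arranged exactly so that the off-diagonal coefficients cancel and the diagonal ones telescope. Hence the system is equivalent to the $n$ contiguity relations together with a single anchor row, and it suffices to prove the latter, say the first:
\[
 (1-c)\,x_1(\tau_c(\mathbf c))=(b_n-a_{n+1}c)\,x_1+\sum_{i=2}^{n+1}(b_{n-i+1}-a_{n-i+2})\,x_i .
\]
Comparing the coefficient of $c^k$ and factoring out the common hypergeometric coefficient $W_k=\prod_{l=1}^{n}(1-b_l)\prod_{l=1}^{n+1}(a_l;q)_k/(b_l;q)_k$ via $P_iV_{i,k}=W_k\prod_{l=1}^{n-i+1}\frac{1-q^k a_l}{1-q^k b_l}$, this reduces to one rational-function identity in $s=q^{k-1}$. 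I would prove it by telescoping the one-step recursion $(1-qs\,b_p)G_p=(1-qs\,a_p)G_{p-1}$ for the partial products $G_p=R(s)\prod_{l=1}^{p}\frac{1-qs\,a_l}{1-qs\,b_l}$, with $R(s)=\prod_{l=1}^{n+1}\frac{1-s a_l}{1-s b_l}$; the sum collapses, and the two surviving boundary terms are resolved using $b_0=1$ and the relation $R(s)=\big(\prod_{l=1}^{n}\frac{1-s a_l}{1-s b_l}\big)\frac{1-s a_{n+1}}{1-sq}$ that encodes $b_{n+1}=q$. As the identity holds for the infinitely many values $s=q^{k-1}$, it holds identically.

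The main obstacle is this anchor row: it is the only genuinely inhomogeneous relation in the system, and the only place where the distinguished values $b_{n+1}=q$ and $b_0=1$ must be used. It is the multivariate analogue of the $q$-binomial theorem — indeed for $n=0$ the single component equals $\sum_{k\ge 0}(a_1;q)_k(qc)^k/(q;q)_k=(a_1qc;q)_\infty/(qc;q)_\infty$, and the anchor is exactly the functional equation $(a_1c;q)_\infty/(c;q)_\infty=\frac{1-a_1c}{1-c}\,(a_1qc;q)_\infty/(qc;q)_\infty$. Once the anchor is in hand, every remaining row follows by adding the contiguity relations in succession, and matching against the explicit form of $M_{\tau_c}$ is the routine entry check already carried out above.
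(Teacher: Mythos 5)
Your proof is correct, but it takes a genuinely different route from the one the paper uses. The paper never verifies \eqref{Eq:q-HGE} directly for $\tau_c$: it factors $\tau_c=\sigma'\sigma$, sets $M_{\tau_c}=\sigma'(M_{\sigma})\,M_{\sigma'}$, and obtains the Fact as a corollary of Theorem \ref{Thm:q-HGE}, whose proof establishes $\mathbf{x}(\sigma'(\mathbf{c}))=M_{\sigma'}\,\mathbf{x}(\mathbf{c})$ (and, similarly, the statement for $\sigma$) by expanding $\mathbf{x}=\sum_k\mathbf{x}_k\,c^k$ and telescoping one-step relations linking consecutive entries of $\mathbf{x}_k$ and $\mathbf{x}_{k-1}$. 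You instead exploit the fact that $\tau_c$ moves only $c$, and split the $(n+1)$-row system into the $n$ contiguity relations $x_j(\tau_c(\mathbf{c}))-x_{j+1}(\tau_c(\mathbf{c}))=b_{n-j+1}\,x_j-a_{n-j+1}\,x_{j+1}$ --- which I checked do coincide with the differences of consecutive rows of $(1-c)\,M_{\tau_c}$ --- plus a single anchor row, proved by the same kind of telescoping after factoring out $W_k$; the boundary terms close up using $b_0=1$ and $b_{n+1}=q$ exactly as you indicate, and the $k=0$ coefficient (which you do not single out) is just the degenerate case of the same identity with the $V_{\bullet,k-1}$ terms absent. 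Your organization isolates the one genuinely hypergeometric ingredient, the anchor row (a multivariate $q$-binomial functional equation, as your $n=0$ check shows), from the elementary contiguity relations, and is shorter and more self-contained if one only wants Fact \ref{Fac:q-HGE}; the paper's detour through $\sigma$ and $\sigma'$ costs more here but is what produces the matrices for all generators of $\widetilde{F}$, which the rest of the paper needs. One cosmetic remark: your closing appeal to the identity holding ``for infinitely many values $s=q^{k-1}$, hence identically'' runs the implication backwards --- the telescoping proves the rational identity for all $s$, and you then specialize to $s=q^{k-1}$; no density argument is needed.
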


System \eqref{Eq:q-HGE} provides a solution of system \eqref{Eq:q-Ric} in terms of ${}_{n+1}\phi_n$.

\begin{fact}[\cite{OS1,Suz1}]\label{Fac:q-HGE_Sol}
A solution of system \eqref{Eq:q-Ric} with the parameters $\mathbf{c}$ is given by
\begin{equation}\label{Eq:q-HGE_Sol}
	\varphi_{2j,0} = -\frac{x_{j+2}}{x_{j+1}}\quad (j=0,\ldots,n-1),\quad
	\varphi_{2n,0} = -q\,c\,\frac{x_1}{x_{n+1}}.
\end{equation}
\end{fact}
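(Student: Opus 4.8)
The plan is to substitute the conjectured solution \eqref{Eq:q-HGE_Sol} into the nonlinear system \eqref{Eq:q-Ric} and to reduce it to the linear relation of Fact \ref{Fac:q-HGE}. First I would impose the specialization \eqref{Eq:Par_Sol}, so that every odd-indexed variable occurring in $T_j$ is replaced by the corresponding $-\alpha_{2j+2l+1}$ and $T_j$ becomes a polynomial in the even-indexed variables $\varphi_{2l,0}$ and the parameters alone. Then I would insert $\varphi_{2l,0}=-x_{l+2}/x_{l+1}$ for $0\le l\le n-1$ together with $\varphi_{2n,0}=-q\,c\,x_1/x_{n+1}$, and use the telescoping of the resulting products of even-indexed variables: a run of consecutive such factors collapses to a single ratio whose denominator is $x_{j+1}$, and a run that crosses the index $2n$ additionally produces the factor $q\,c$ carried by $\varphi_{2n,0}$ while cycling $x_{n+1}$ back to $x_1$. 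Clearing the common denominator, the quantity $x_{j+1}\,T_j$ becomes a linear combination of $x_1,\dots,x_{n+1}$ whose coefficients are monomials in the $\alpha_k$, weighted by $c$ precisely on the terms coming from a wrap-around.

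The heart of the argument is to recognize this linear combination. After the change of variables $\alpha_k\leftrightarrow(a,b,c)$ fixed before \eqref{Eq:q-HGE}, I claim that $x_{j+1}\,T_j$ equals an explicit monomial $\gamma_j$ in the $\alpha_k$ times the $(j+1)$-th entry of $(1-c)\,M_{\tau_c}\,\mathbf{x}$. Granting this, Fact \ref{Fac:q-HGE} gives
\[
	T_j=\gamma_j\,(1-c)\,\frac{x_{j+1}(\tau_c(\mathbf{c}))}{x_{j+1}(\mathbf{c})},
\]
and a short computation with the explicit $\gamma_j$ shows $\gamma_{j+1}/\gamma_j=\alpha_{2j-1}\,\alpha_{2j}$. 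Feeding these into the right-hand side of \eqref{Eq:q-Ric}, the factors $1-c$, $\alpha_{2j-1}$, $\alpha_{2j}$, $x_{j+1}(\mathbf{c})$ and $x_{j+2}(\mathbf{c})$ all cancel, leaving
\[
	\frac{\varphi_{2j,0}}{\alpha_{2j-1}\,\alpha_{2j}}\,\frac{T_{j+1}}{T_j}=-\frac{x_{j+2}(\tau_c(\mathbf{c}))}{x_{j+1}(\tau_c(\mathbf{c}))}=\tau_c(\varphi_{2j,0}),
\]
which is the desired equation for $j=0,\dots,n-1$; the case $j=n$ follows identically once $T_{n+1}$ and $\varphi_{2n,0}$ are read through the same cyclic telescoping.

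The step I expect to be the real work is the identification $x_{j+1}\,T_j=\gamma_j\cdot[(1-c)\,M_{\tau_c}\,\mathbf{x}]_{j+1}$. Concretely one must match, coefficient by coefficient, the monomials in the $\alpha_k$ emitted by the telescoped sum against the diagonal entries $b_{n-j_1+1}-a_{n-j_1+2}\,c$ and the off-diagonal entries $b_{n-j_2+1}-a_{n-j_2+2}$ of $(1-c)\,M_{\tau_c}$, and check that the division of $M_{\tau_c}$ into its $c$-free upper-triangular part and its $c$-weighted lower-triangular part is reproduced exactly by whether or not the corresponding telescoping run wraps past $\varphi_{2n,0}$. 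I would organize this bookkeeping by first treating $j=0$, where no wrap-around occurs and $x_1\,T_0$ collapses directly to $\gamma_0(1-c)\,x_1(\tau_c(\mathbf{c}))$, thereby fixing the normalization of $\gamma_0$, and then propagating to arbitrary $j$ using the index-shift symmetry built into the definitions of $T_j$ and of the parametrization of the $\alpha_k$. The cyclic endpoint $j=n$, where the factor $q\,c$ and the identification $x_{n+2}=x_1$ must both be tracked, is the one place I would verify the signs and powers of $c$ by hand.
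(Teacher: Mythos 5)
The paper does not actually prove this statement: it is imported as a Fact from \cite{OS1,Suz1}, so there is no in-paper argument to compare against. Taken on its own terms, your reduction is the natural one and its skeleton is correct: granting the single identity $x_{j+1}\,T_j\bigm|_{Eq.\,\eqref{Eq:q-HGE_Sol}}=\gamma_j\,(1-c)\,\bigl[M_{\tau_c}\mathbf{x}\bigr]_{j+1}$ with $\gamma_{j+1}=\alpha_{2j-1}\,\alpha_{2j}\,\gamma_j$, Fact~\ref{Fac:q-HGE} turns $T_{j+1}/T_j$ into $\alpha_{2j-1}\,\alpha_{2j}\,\frac{x_{j+1}}{x_{j+2}}\,\frac{x_{j+2}(\tau_c(\mathbf{c}))}{x_{j+1}(\tau_c(\mathbf{c}))}$, and \eqref{Eq:q-Ric} follows by exactly the cancellation you describe. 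I checked that this closes correctly for $n=1$, with $\gamma_0=\alpha_2$ and $\gamma_1=\alpha_0\,\alpha_2\,\alpha_3$, so the normalization and the recursion for $\gamma_j$ are right.

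Two caveats. First, the identity you isolate as ``the real work'' is in fact all of the work and is only sketched; as written the proposal is a plan rather than a proof, and the coefficient-by-coefficient match against the $c$-free upper-triangular and $c$-weighted lower-triangular parts of $(1-c)\,M_{\tau_c}$ is precisely where the content lies. Second, and more substantively, your telescoping (``a run of consecutive such factors collapses to a ratio whose denominator is $x_{j+1}$'') tacitly reads the even-indexed products in $T_j$ as $\prod_{l=0}^{k-1}\varphi_{2j+2l,0}$ and $\prod_{l=k}^{n}\alpha_{2j+2l}$, i.e.\ shifted by $2j$ in the same way as the odd-indexed ones. That is the reading under which the key identity is true; with the formula as literally printed (products starting at $\varphi_{0,0}$ and $\alpha_{2k}$) every $T_j$ has denominator $x_1$, the proportionality of $x\,T_j$ to $[M_{\tau_c}\mathbf{x}]_{j+1}$ already fails for $n=1$, $j=1$, and the statement cannot be recovered. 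You should make this reading explicit, since the whole argument hinges on it.
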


\begin{rem}
In a precise sense, the translation $\tau_c$ coincides with the $q$-Painlev\'e system $q$-$P_{(n+1,n+1)}$ given in \cite{Suz1}.
The original $q$-Garnier system given in \cite{Sak1} is derived from other translations of $\widetilde{G}$; see Remark \ref{Rem:q-Gar}.
\end{rem}

\section{A particular solution of the $q$-Garnier system}\label{Sec:Par_Sol}

\subsection{Results of this section}

Let us find elements of $\widetilde{G}$ such that condition \eqref{Eq:Par_Sol_Par} is invariant under their actions.
Then we obtain compositions of the transformations
\begin{equation}\begin{split}\label{Eq:Aff_Wey}
	&p_i = r_{-2i-3}\,r_{-2i-2}\,r_{-2i-3},\quad
	p'_i = r_{-2i-4}\,r_{-2i-3}\,r_{-2i-4}\quad (i\in\mathbb{Z}_{n+1}), \\
	&\sigma = \pi\,s_0\,r_1\,r_3\ldots r_{2n+1},\quad
	\sigma' = (\pi')^{-1}s'_0\,r_0\,r_2\ldots r_{2n},\quad
	\pi_1 = \pi^2,\quad
	\pi_2 = \rho\,(\pi')^{-1}\pi.
\end{split}\end{equation}

\begin{thm}\label{Thm:Par_Sol}
The actions of transformations given by \eqref{Eq:Aff_Wey} on the dependent variables $\varphi_{j,i}$ $(j\in\mathbb{Z}_{2n+2},i\in\mathbb{Z}_2)$ are consistent with condition \eqref{Eq:Par_Sol}.
\end{thm}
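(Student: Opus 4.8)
The plan is to reduce the theorem to a finite check over the generators listed in \eqref{Eq:Aff_Wey} and then dispatch them case by case using the explicit action formulas. The first observation I would record is that condition \eqref{Eq:Par_Sol} is not two independent families of equations but one: since $\alpha_{2j+1}=\varphi_{2j+1,0}\,\varphi_{2j+1,1}$ by definition, the equation $\varphi_{2j+1,0}=-\alpha_{2j+1}$ is an automatic consequence of $\varphi_{2j+1,1}=-1$. Hence the locus cut out by \eqref{Eq:Par_Sol} is exactly $V=\{\varphi_{2j+1,1}=-1\ (j=0,\ldots,n)\}$. Because every generator $T$ acts on $\mathbb{C}(\varphi_{j,i})$ as a field automorphism respecting $T(\alpha_{2j+1})=T(\varphi_{2j+1,0})\,T(\varphi_{2j+1,1})$, invariance of $V$ is equivalent to the single family of identities
\[
	T(\varphi_{2j+1,1})\bigm|_{V}=-1\qquad(j=0,\ldots,n),
\]
after which $T(\varphi_{2j+1,0})|_{V}=-T(\alpha_{2j+1})$ follows for free. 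Thus I only ever have to track the second components of the odd-indexed variables.

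With this reduction the diagram-type generators are immediate. Writing out the compositions gives $\pi_1(\varphi_{j,i})=\pi^2(\varphi_{j,i})=\varphi_{j+2,i}$ and $\pi_2(\varphi_{j,i})=\rho(\pi')^{-1}\pi(\varphi_{j,i})=\varphi_{-j,\,i+1-j}$, both of which preserve the parity of the first index; on odd indices they restrict to $\varphi_{2j+1,1}\mapsto\varphi_{2j+3,1}$ and $\varphi_{2j+1,1}\mapsto\varphi_{-2j-1,1}$ respectively. Since these are permutations of the very variables pinned by $V$, and the induced parameter maps $\pi_1(\alpha_{2j+1})=\alpha_{2j+3}$ and $\pi_2(\alpha_{2j+1})=\alpha_{-2j-1}$ match, invariance of $V$ under $\pi_1,\pi_2$ is manifest.

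The reflection conjugates $p_i,p'_i$ are the first genuinely computational case. Each is a length-three word $r_a r_{a+1} r_a$ in which the flanking and middle reflections have opposite index-parities, so in the $S_{2n+2}$ picture $p_i$ realizes the transposition of the two \emph{odd} slots $a,a+2$ while $p'_i$ transposes two \emph{even} slots; this already explains heuristically why $V$ is preserved. To make it rigorous I would compose the three explicit maps for $r_j$ symbolically, as rational functions in the $\varphi_{j,i}$, and only then restrict to $V$. The point to watch is that the intermediate maps are singular on $V$: for instance $r_{2k+1}(\varphi_{2k,0})$ carries the factor $(1+\varphi_{2k+1,0})/(1+\varphi_{2k+1,1})$ whose denominator vanishes on $V$. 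These poles cancel in the full composite, which is regular along $V$, so the restriction must be performed after composition rather than term by term.

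The main obstacle is $\sigma$ and $\sigma'$, where I expect essentially all the real work to sit. Taking $\sigma=\pi\,s_0\,R$ with $R=r_1 r_3\cdots r_{2n+1}$, the commuting product $R$ simultaneously inverts every odd variable, $R(\varphi_{2k+1,0})=1/\varphi_{2k+1,1}$ and $R(\varphi_{2k+1,1})=1/\varphi_{2k+1,0}$, after which $s_0$ acts through the sums $Q_{j,0}=\sum_{k}\prod_{l=0}^{k-1}\varphi_{j+l,0}$ and finally $\pi$ rotates the index and swaps the two components. Verifying $\sigma(\varphi_{2j+1,1})|_{V}=-1$ therefore hinges on showing that, once the $R$-images are inserted, the ratios $Q_{j,0}/Q_{j+2,0}$ in $s_0$ telescope to a controllable form on $V$; the parallel analysis with $s'_0$, the $Q'_{j,0}$ and $(\pi')^{-1}$ handles $\sigma'$. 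As a consistency check on these two I would exploit the known direction $\tau_c=(\pi')^{-1}\pi\,s'_0 s_0$ of Fact \ref{Fac:q-Ric}, which already preserves \eqref{Eq:Par_Sol}, to cross-validate the combined effect of $s_0$ and $s'_0$ on the $Q$-sums. Once all six families of generators are verified, invariance of $V$ under the group they generate is automatic, completing the proof.
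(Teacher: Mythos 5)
Your overall strategy is the paper's: reduce to a generator-by-generator check, observe that $\varphi_{2j+1,0}=-\alpha_{2j+1}$ follows automatically from $\varphi_{2j+1,1}=-1$ because $T(\alpha_{2j+1})=T(\varphi_{2j+1,0})\,T(\varphi_{2j+1,1})$ for any field automorphism $T$ (the paper uses exactly this via ``$\varphi_{2j+1,0}=\alpha_{2j+1}/\varphi_{2j+1,1}$''), dispose of $\pi_1,\pi_2$ as index permutations of the pinned variables, and treat $p_i,p'_i$ by composing before restricting. Your warning that individual odd-index reflections are singular along $V$ (the factor $1+\varphi_{2k+1,1}$ in the denominator) and that restriction must happen only after composition is correct and worth making explicit; the paper glosses over the $p_i,p'_i$ cases entirely.

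The genuine gap is that for $\sigma$ and $\sigma'$ --- the only cases the paper actually proves, and by your own admission where ``essentially all the real work'' sits --- you state what must be verified but do not verify it. The missing content is concrete: for $\sigma=\pi\,s_0\,r_1r_3\cdots r_{2n+1}$ one computes $\pi\,s_0(\varphi_{2j+1,0})=\varphi_{2j+3,1}^{-1}\,Q_{2j+2,1}/Q_{2j+4,1}$, and the whole proof rests on the identity
\[
	\sum_{k=0}^{2n+1}\prod_{l=0}^{k-1}\varphi_{2j+l,1}\Bigm|_{Eq.~\eqref{Eq:Par_Sol}} = 1+(-1)^n\prod_{l=0}^{n}\varphi_{2l,0},
\]
whose right-hand side is \emph{independent of $j$}, so that consecutive $Q$-ratios restrict to $1$; your phrase ``telescope to a controllable form'' names the goal without establishing it. For $\sigma'$ the situation is worse than your sketch suggests: one must first use the commutation relations to rewrite $\sigma'=r_1r_3\cdots r_{2n+1}\,(\pi')^{-1}s'_0$ (so that, contrary to the order you describe for $\sigma$, the reflections really are applied \emph{to} the $Q'$-sums), and then show that after applying $r_1r_3\cdots r_{2n+1}$ every term of $\sum_{k}\prod_{l}\varphi_{2j-l,l+1}$ except $1+\beta'_1/\varphi_{2j+1,0}$ acquires a factor $1+\varphi_{2j+1,1}$ and hence vanishes on $V$, leaving the $j$-independent value $1-\beta'_1$ in both numerator and denominator. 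Without these two computations the argument for the generators $\sigma,\sigma'$ --- and therefore for the theorem --- is not complete.
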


We will prove this theorem in Section \ref{SubSec:Par_Sol}.
Hence we obtain a subgroup of $\widetilde{G}$ which provides particular solutions of some directions of the $q$-Garnier system.
As a matter of fact, the translation $\tau_c=\sigma'\sigma$ is an element of this subgroup and Fact \ref{Fac:q-Ric} follows from Theorem \ref{Thm:Par_Sol}.

In the following, we always assume that condition \eqref{Eq:Par_Sol} holds.
Then the transformations given by \eqref{Eq:Aff_Wey} act on the dependent variables $\varphi_{2j,0}$ $(j\in\mathbb{Z}_{n+1})$ as
\begin{align*}
	p_i(\varphi_{-2i-4,0}) &= \frac{\varphi_{-2i-4,0}\left\{a_{i+1}-b_i+(a_{i+1}-a_i)\,\varphi_{-2i-2,0}\right\}}{a_i-b_i}, \\
	p_i(\varphi_{-2i-2,0}) &= \frac{(a_i-b_i)\,\varphi_{-2i-2,0}}{a_{i+1}-b_i+(a_{i+1}-a_i)\,\varphi_{-2i-2,0}}, \\
	p_i(\varphi_{2j,0}) &= \varphi_{2j,0}\quad (j\neq-2i-4,-2i-2),
\end{align*}
for $i=0,\ldots,n$,
\begin{align*}
	p'_i(\varphi_{-2i-4,0}) &= \frac{b_i-b_{i+1}+(b_i-a_{i+1})\,\varphi_{-2i-4,0}}{b_{i+1}-a_{i+1}}, \\
	p'_i(\varphi_{-2i-2,0}) &= \frac{(b_{i+1}-a_{i+1})\,\varphi_{-2i-4,0}\,\varphi_{-2i-2,0}}{b_i-b_{i+1}+(b_i-a_{i+1})\,\varphi_{-2i-4,0}}, \\
	p'_i(\varphi_{2j,0}) &= \varphi_{2j,0}\quad (j\neq-2i-4,-2i-2),
\end{align*}
for $i=0,\ldots,n$ and
\begin{align*}
	\sigma(\varphi_{2j,0}) &= \frac{(a_{n-j}-b_{n-j})\,\varphi_{2j,0}\,(b_{n-j-1}+a_{n-j-1}\,\varphi_{2j+2,0})}{(a_{n-j-1}-b_{n-j-1})(b_{n-j}+a_{n-j}\,\varphi_{2j,0})}, \\
	\sigma'(\varphi_{2j,0}) &= \frac{\varphi_{2j,0}\sum_{k=0}^{n}(-1)^k(b_{n-j-k-1}-a_{n-j-k})\prod_{l=1}^{k}\varphi_{2j+2l,0}}{\sum_{k=0}^{n}(-1)^k(b_{n-j-k}-a_{n-j-k+1})\prod_{l=1}^{k}\varphi_{2j+2l-2,0}}, \\
	\pi_1(\varphi_{2j,0}) &= \varphi_{2j+2,0}, \\
	\pi_2(\varphi_{2j,0}) &= \frac{b_{j-1}}{a_{j-1}\,\varphi_{-2j,0}}.
\end{align*}
They also act on the parameters $\mathbf{c}$ as
\[
	p_i(a_j) = a_{(i,i+1)(j)},\quad
	p_i(b_j) = b_j,\quad
	p_i(c) = c,
\]
for $i=0,\ldots,n$,
\begin{align*}
	&p'_0(a_j) = \frac{a_j}{b_1},\quad
	p'_0(b_j) = \frac{b_{(0,1)(j)}}{b_1},\quad
	p'_0(c) = c, \\
	&p'_i(a_j) = a_j,\quad
	p'_i(b_j) = b_{(i,i+1)(j)},\quad
	p'_i(c) = c, \\
	&p'_n(a_j) = \frac{q\,a_j}{b_n},\quad
	p'_n(b_j) = \frac{q\,b_{(n,n+1)(j)}}{b_n},\quad
	p'_n(c) = c,
\end{align*}
for $i=1,\ldots,n-1$ and
\begin{align*}
	&\sigma(a_j) = a_{j-1},\quad
	\sigma(b_j) = b_j,\quad
	\sigma(c) = c, \\
	&\sigma'(a_j) = a_{j+1},\quad
	\sigma'(b_j) = b_j,\quad
	\sigma'(c) = q^{-1}c, \\
	&\pi_1(a_j) = \frac{q\,a_{j-1}}{b_n},\quad
	\pi_1(b_j) = \frac{q\,b_{j-1}}{b_n},\quad
	\pi_1(c) = c, \\
	&\pi_2(a_j) = \frac{q\,a_{n-1}}{b_{2n-j}},\quad
	\pi_2(b_j) = \frac{q\,a_{n-1}}{a_{2n-j}},\quad
	\pi_2(c) = \frac{\prod_{l=1}^{n}b_l}{q\,c\prod_{l=1}^{n+1}a_l},
\end{align*}
where $(i_1,i_2)$ is a transposition.

\begin{prop}
If we set
\[
	F = \langle p_0,\ldots,p_n\rangle,\quad
	F' = \langle p'_0,\ldots,p'_n\rangle,
\]
then both $F$ and $F'$ are isomorphic to the affine Weyl group of type $A^{(1)}_n$ and $F\,F'=F'F$.
Moreover, the transformations $\sigma,\sigma',\pi_1,\pi_2$ satisfy relations
\begin{equation}\begin{split}\label{Eq:Dia_Aut_HGE}
	&p_i\,\sigma = \sigma\,p_{i+1},\quad
	p'_i\,\sigma = \sigma\,p'_i,\quad
	p_i\,\sigma' = \sigma'p_{i-1},\quad
	p'_i\,\sigma' = \sigma'p'_i, \\
	&p_i\,\pi_1 = \pi_1\,p_{i+1},\quad
	p'_i\,\pi_1 = \pi_1\,p'_{i+1},\quad
	p_i\,\pi_2 = \pi_2\,p'_{-i-3}, \\
	&\sigma\,\sigma' = \sigma'\sigma,\quad
	\sigma\,\pi_1 = \pi_1\,\sigma,\quad
	\sigma'\pi_1 = \pi_1\,\sigma',\quad
	\sigma\,\pi_2 = \pi_2\,\pi_1^{-1}(\sigma')^{-1}, \\
	&\pi_1^{n+1} = \pi_2^2 = 1, \quad
	\pi_1\,\pi_2 = \pi_2\,\pi_1^{-1}.
\end{split}\end{equation}
\end{prop}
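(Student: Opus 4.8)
The plan is to regard every assertion as an identity in the extended affine Weyl group $\widetilde{G}$, whose full presentation is supplied by the two Facts of Section~\ref{Sec:q-Gar}; since each of $p_i,p'_i,\sigma,\sigma',\pi_1,\pi_2$ in \eqref{Eq:Aff_Wey} is an explicit word in $r_j,s_i,s'_i,\pi,\pi',\rho$, nothing beyond those relations is needed. First I would record the rewriting rules used throughout: that $s_0,s_1,s'_0,s'_1$ commute with every $r_j$ (mutual commutativity of $G,H,H'$); that conjugation by $\pi,\pi',\rho$ shifts indices, $\pi^{-1}r_j\pi=(\pi')^{-1}r_j\pi'=r_{j-1}$ and $\rho\,r_j\,\rho=r_{-j}$, together with $s_i\pi'=\pi's_i$, $s'_i\pi=\pi s'_i$ and $s_i\rho=\rho s'_i$; and that the odd generators $r_1,r_3,\dots,r_{2n+1}$ are pairwise non-adjacent on the affine cycle, hence mutually commute, as do the even ones. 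In particular $R:=r_1r_3\cdots r_{2n+1}$ and $R':=r_0r_2\cdots r_{2n}$ are involutions with $\pi R=R'\pi$ and $(\pi')^{-1}R=R'(\pi')^{-1}$.

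For $F\cong F'\cong A^{(1)}_n$ I would first verify the Coxeter relations among the reflections $p_i=r_{-2i-3}r_{-2i-2}r_{-2i-3}$. The identity $p_i^2=1$ is immediate, while the braid relation $p_ip_{i+1}p_i=p_{i+1}p_ip_{i+1}$ localizes to the parabolic subgroup on the four consecutive generators $r_{-2i-5},\dots,r_{-2i-2}$: inside the resulting $A_4\cong S_5$ the elements $p_i,p_{i+1}$ become the transpositions $(35)$ and $(13)$, which share an index and hence braid. For cyclically non-adjacent $i,j$ the two supports are separated, so $p_ip_j=p_jp_i$ automatically. Since the end generators also interlock ($p_0$ and $p_n$ braid because $r_{2n}$ and $r_{2n+1}$ are adjacent), these are exactly the Coxeter relations for the $(n+1)$-cycle, giving a surjection $W(A^{(1)}_n)\twoheadrightarrow F$. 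To promote this to an isomorphism I would use the displayed action on parameters, $p_i(a_j)=a_{(i,i+1)(j)}$ with $a_{j+n+1}=q\,a_j$, which is the faithful realization of the affine symmetric group $\widetilde{S}_{n+1}\cong W(A^{(1)}_n)$; faithfulness forces the surjection to be injective. The identical argument, now with the $p'_i$ acting on the $b_j$, handles $F'$.

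For $F\,F'=F'F$ I would show $p_ip'_j=p'_jp_i$ for all $i,j$; this is automatic when the supports are separated, and the only delicate cases $j\in\{i-1,i\}$ sit inside a common parabolic $A_3\cong S_4$ on three consecutive $r$'s, where $p_i$ and $p'_j$ turn out to be the \emph{disjoint} transpositions $(24)$ and $(13)$ and hence commute. Among the relations \eqref{Eq:Dia_Aut_HGE} the $\pi_1,\pi_2$-relations are short: $\pi_1^{n+1}=\pi^{2n+2}=1$, while $\pi_2^2=1$ and $\pi_1\pi_2=\pi_2\pi_1^{-1}$ follow from $\pi\rho=\rho(\pi')^{-1}$, $\pi^2=(\pi')^2$ and $\rho^2=1$ in a couple of lines; $p_i\pi_1=\pi_1p_{i+1}$ and $p'_i\pi_1=\pi_1p'_{i+1}$ come from $\pi^{-2}r_j\pi^2=r_{j-2}$; and $\sigma\pi_1=\pi_1\sigma$, $\sigma'\pi_1=\pi_1\sigma'$ hold because $\pi^2$ permutes the odd (resp.\ even) $r$'s among themselves while commuting with $s_0$ (resp.\ $s'_0$). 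The commutation $\sigma\sigma'=\sigma'\sigma$ I would obtain by pushing the $\pi,\pi'$ to the left (interchanging $R$ and $R'$ along the way) and using that $s_0,s'_0$ commute with each other and with all $r$'s, reducing both products to $\pi(\pi')^{-1}s_0s'_0$. Finally $p_i\pi_2=\pi_2p'_{-i-3}$ reduces, after $\rho\,r_j\,\rho=r_{-j}$ and two index shifts, to the single braid relation $r_{2i+3}r_{2i+2}r_{2i+3}=r_{2i+2}r_{2i+3}r_{2i+2}$.

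The heart of the proof, and the step I expect to be the main obstacle, is the family $p_i\sigma=\sigma p_{i+1}$, $p'_i\sigma=\sigma p'_i$, $p_i\sigma'=\sigma'p_{i-1}$, $p'_i\sigma'=\sigma'p'_i$ and $\sigma\pi_2=\pi_2(\sigma')^{-1}$. The conceptual reduction is clean: because $s_0$ commutes with every $r_j$, conjugation of a word in the $r$'s by $\sigma=\pi s_0 R$ equals conjugation by $\pi R$, so for instance $\sigma^{-1}p_i\sigma=R\,(\pi^{-1}p_i\pi)\,R=R\,p'_i\,R$, and it remains to prove the pure $A^{(1)}_{2n+1}$ identity $R\,p'_i\,R=p_{i+1}$ (with its three siblings, and the $\rho$-twisted analogue for $\sigma\pi_2=\pi_2(\sigma')^{-1}$). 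Each of these localizes to the three consecutive nodes $-2i-5,-2i-4,-2i-3$, since $R$ fails to commute with $r_{-2i-4}$ only through its two neighbouring odd factors, and there it becomes a short permutation identity in $S_4$. The difficulty is bookkeeping rather than conceptual: one must keep all indices consistent modulo $2n+2$, track exactly which reflections are adjacent on the affine cycle so that the correct braid or commutation relation is invoked, and check that the wrap-around cases $i=0$ and $i=n$ behave uniformly with the generic case. Once the template $R\,p'_i\,R=p_{i+1}$ is verified carefully, the remaining conjugation relations follow by the same localization together with the shift rules for $\pi,\pi',\rho$ and the interchange $s_i\rho=\rho s'_i$.
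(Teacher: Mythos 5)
Your proposal is correct and takes essentially the same route as the paper: everything is derived purely from the fundamental relations of $\widetilde{G}$, and your reduction $\sigma^{-1}p_i\,\sigma=R\,(\pi^{-1}p_i\,\pi)\,R=R\,p'_i\,R=p_{i+1}$ is exactly the paper's displayed word computation for $p_i\,\sigma=\sigma\,p_{i+1}$, repackaged via the clean observation $\pi^{-1}p_i\,\pi=p'_i$ and localization to a parabolic $S_4$. The one place you go beyond the paper is the isomorphism claim: verifying the Coxeter relations only yields a surjection $W(A^{(1)}_n)\twoheadrightarrow F$, and your appeal to the faithful affine-permutation action $p_i(a_j)=a_{(i,i+1)(j)}$, $a_{j+n+1}=q\,a_j$ is a genuine (and needed) supplement to what the paper writes down. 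One small caveat: the $A_4\cong S_5$ localization for the braid relation $p_i\,p_{i+1}\,p_i=p_{i+1}\,p_i\,p_{i+1}$ requires $n\geq2$ so that four consecutive nodes of the $(2n+2)$-cycle do not wrap around; for $n=1$ the generators $p_0,p_1$ must \emph{not} braid (the target is $W(A^{(1)}_1)$), so that case needs to be excluded from, not covered by, your argument.
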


\begin{proof}
Thanks to Theorem \ref{Thm:Par_Sol}, we can prove this proposition by using only the fundamental relations for $\widetilde{G}$.
For example, the relation $p_i\,\sigma=\sigma\,p_{i+1}$ of \eqref{Eq:Dia_Aut_HGE} is shown as
\begin{align*}
	p_i\,\sigma &= r_{-2i-3}\,r_{-2i-2}\,r_{-2i-3}\,\pi\,s_0\,r_1\,r_3\ldots r_{2n+1} \\
	&= \pi\,s_0\,r_{-2i-4}\,r_{-2i-3}\,r_{-2i-4}\,r_1\,r_3\ldots r_{2n+1} \\
	&= \pi\,s_0\,r_1\ldots r_{-2i-7}\,r_{-2i-1}\ldots r_{2n+1}\,r_{-2i-4}\,r_{-2i-3}\,r_{-2i-4}\,r_{-2i-3}\,r_{-2i-5} \\
	&= \pi\,s_0\,r_1\ldots r_{-2i-7}\,r_{-2i-1}\ldots r_{2n+1}\,r_{-2i-3}\,r_{-2i-4}\,r_{-2i-5} \\
	&= \pi\,s_0\,r_1\ldots r_{-2i-7}\,r_{-2i-1}\ldots r_{2n+1}\,r_{-2i-3}\,r_{-2i-5}\,r_{-2i-4}\,r_{-2i-5}\,r_{-2i-4} \\
	&= \pi\,s_0\,r_1\,r_3\ldots r_{2n+1}\,r_{-2i-5}\,r_{-2i-4}\,r_{-2i-5} \\
	&= \sigma\,p_{i+1}.
\end{align*}
We can show the other relations of \eqref{Eq:Dia_Aut_HGE} and the fundamental relations for $F$ and $F'$ in a similar way.
We don't state its detail here.
\end{proof}

Hence we can regard a semi-direct product $\widetilde{F}=\langle F,F'\rangle\rtimes\langle\sigma,\sigma',\pi_1,\pi_2\rangle$ as an extended affine Weyl group of type $A_n^{(1)}\times A_n^{(1)}$.

\subsection{Proof of Theorem \ref{Thm:Par_Sol}}\label{SubSec:Par_Sol}

We first prove for the transformation $\sigma=\pi\,s_0\,r_1\,r_3\ldots r_{2n+1}$.
The action of the composition of the transformations $r_1\,r_3\ldots r_{2n+1}$ on the dependent variables $\varphi_{2j+1,1}$ $(j=0,\ldots,n)$ is described as
\[
	r_1\,r_3\ldots r_{2n+1}(\varphi_{2j+1,1}) = \frac{1}{\varphi_{2j+1,0}}.
\]
Thus it is enough to show that
\begin{equation}\label{Eq:Par_Sol_Proof_1}
	\pi\,s_0(\varphi_{2j+1,0})\bigm|_{Eq. \eqref{Eq:Par_Sol}} = -1\quad (j=0,\ldots,n).
\end{equation}
Note that
\[
	\varphi_{2j+1,0} = \frac{\alpha_{2j+1}}{\varphi_{2j+1,1}}\quad (j=0,\ldots,n).
\]
From the definitions of $s_0$ and $\pi$, we obtain
\[
	\pi\,s_0(\varphi_{2j+1,0}) = \frac{1}{\varphi_{2j+3,1}}\frac{\sum_{k=0}^{2n+1}\prod_{l=0}^{k-1}\varphi_{2j+2+l,1}}{\sum_{k=0}^{2n+1}\prod_{l=0}^{k-1}\varphi_{2j+4+l,1}}.
\]
On the other hand, we have
\[
	\sum_{k=0}^{2n+1}\prod_{l=0}^{k-1}\varphi_{2j+l,1}\bigm|_{Eq. \eqref{Eq:Par_Sol}} = 1 + (-1)^n\prod_{l=0}^{n}\varphi_{2l,0},
\]
for any $j=0,\ldots,n$.
Combining them, we obtain equation \eqref{Eq:Par_Sol_Proof_1}.

We next prove for the transformation $\sigma'$.
The fundamental relations for $\widetilde{G}$ imply
\[
	\sigma' = r_1\,r_3\ldots r_{2n+1}\,(\pi')^{-1}s'_0.
\]
The action of $(\pi')^{-1}s'_0$ on the dependent variables is described as
\[
	(\pi')^{-1}s'_0(\varphi_{2j+1,1}) = \frac{1}{\varphi_{2j-1,0}}\frac{\sum_{k=0}^{2n+1}\prod_{l=0}^{k-1}\varphi_{2j-l,l+1}}{\sum_{k=0}^{2n+1}\prod_{l=0}^{k-1}\varphi_{2j-l-2,l+1}}.
\]
Thus it is enough to show that
\begin{equation}\label{Eq:Par_Sol_Proof_2}
	r_1\,r_3\ldots r_{2n+1}\left(\frac{1}{\varphi_{2j-1,0}}\frac{\sum_{k=0}^{2n+1}\prod_{l=0}^{k-1}\varphi_{2j-l,l+1}}{\sum_{k=0}^{2n+1}\prod_{l=0}^{k-1}\varphi_{2j-l-2,l+1}}\right)\bigm|_{Eq. \eqref{Eq:Par_Sol}} = -1\quad (j=0,\ldots,n).
\end{equation}
We focus on a factor
\[
	\sum_{k=0}^{2n+1}\prod_{l=0}^{k-1}\varphi_{2j-l,l+1} = 1 + \frac{\beta'_1}{\varphi_{2j+1,0}} + \sum_{k=1}^{n}(1+\varphi_{2j-2k+1,0})\prod_{l=0}^{k-2}\varphi_{2j-2l-1,0}\prod_{l=0}^{k-1}\varphi_{2j-2l,1}.
\]
By a direct calculation, we obtain
\begin{align*}
	&r_1\,r_3\ldots r_{2n+1}(1+\varphi_{2j-2k+1,0}) = \frac{1+\varphi_{2j-2k+1,1}}{\varphi_{2j-2k+1,1}}, \\
	&r_1\,r_3\ldots r_{2n+1}\left(\prod_{l=0}^{k-2}\varphi_{2j-2l-1,0}\right) = \prod_{l=0}^{k-2}\frac{1}{\varphi_{2j-2l-1,1}},\\
	&r_1\,r_3\ldots r_{2n+1}\left(\prod_{l=0}^{k-1}\varphi_{2j-2l,1}\right) = \frac{(1+\varphi_{2j+1,1})(1+\varphi_{2j-2k+1,0})}{(1+\varphi_{2j+1,0})(1+\varphi_{2j-2k+1,1})}\prod_{l=0}^{k-1}\varphi_{2j-2l-1,1}\,\varphi_{2j-2l,1}\,\varphi_{2j-2l+1,0}.
\end{align*}
It follows that
\begin{align*}
	&r_1\,r_3\ldots r_{2n+1}\left((1+\varphi_{2j-2k+1,0})\prod_{l=0}^{k-2}\varphi_{2j-2l-1,0}\prod_{l=0}^{k-1}\varphi_{2j-2l,1}\right)\bigm|_{Eq. \eqref{Eq:Par_Sol}} \\
	&= \frac{(1+\varphi_{2j+1,1})(1+\varphi_{2j-2k+1,0})}{1+\varphi_{2j+1,0}}\prod_{l=0}^{k-1}\varphi_{2j-2l,1}\,\varphi_{2j-2l+1,0}\bigm|_{Eq. \eqref{Eq:Par_Sol}} \\
	&= 0.
\end{align*}
Moreover, we have
\[
	r_1\,r_3\ldots r_{2n+1}\left(1+\frac{\beta'_1}{\varphi_{2j+1,0}}\right)\bigm|_{Eq. \eqref{Eq:Par_Sol}} = \left(1+\beta'_1\,\varphi_{2j+1,1}\right)\bigm|_{Eq. \eqref{Eq:Par_Sol}}
	= 1 - \beta'_1,
\]
and
\[
	r_1\,r_3\ldots r_{2n+1}\left(\frac{1}{\varphi_{2j-1,0}}\right)\bigm|_{Eq. \eqref{Eq:Par_Sol}} = \varphi_{2j-1,1}\bigm|_{Eq. \eqref{Eq:Par_Sol}} = -1.
\]
Combining them, we obtain equation \eqref{Eq:Par_Sol_Proof_2}.

We can prove for the other transformations in a similar way.
We don't state its detail here.

\section{An affine Weyl group action on the basic hypergeometric series}\label{Sec:Wey_HGF}

\subsection{Results of this section}

As is seen in the previous section, the translation $\tau_c$, or equivalently system \eqref{Eq:q-Ric}, is an element of the group $\widetilde{F}$.
Moreover, this group provides a symmetry for system \eqref{Eq:q-Ric}.

\begin{prop}\label{Prop:q-Ric_Sym}
Any element of the group $\langle F,F'\rangle\rtimes\langle\sigma,\sigma',\pi_1\rangle$ is commutative with the translation $\tau_c$.
Besides, the transformation $\pi_2$ satisfies a relation $\pi_2\,\tau_c=\tau_c^{-1}\pi_2$.
\end{prop}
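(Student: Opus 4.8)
```latex
\textbf{Proof proposal.}
The plan is to prove the commutativity assertions purely at the level of the group
$\widetilde{F}$, by invoking the fundamental relations \eqref{Eq:Dia_Aut_HGE}
together with the explicit expression $\tau_c=\sigma'\sigma$ recorded just after
Theorem \ref{Thm:Par_Sol}. This is entirely analogous to the proof of the previous
proposition: once $\tau_c$ is expressed as a word in the generators, every statement
becomes a finite rewriting computation using only the braid-type and commutation
relations in \eqref{Eq:Dia_Aut_HGE}, so no birational calculation on the variables
$\varphi_{j,i}$ is needed.

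First I would fix the normal form $\tau_c=\sigma'\sigma$ and check directly from
\eqref{Eq:Dia_Aut_HGE} that $\sigma$ and $\sigma'$ commute with $\sigma$, $\sigma'$ and
$\pi_1$. Indeed $\sigma\sigma'=\sigma'\sigma$ and $\sigma\pi_1=\pi_1\sigma$,
$\sigma'\pi_1=\pi_1\sigma'$ are listed in \eqref{Eq:Dia_Aut_HGE}, so $\tau_c$ commutes
with $\sigma$, $\sigma'$ and $\pi_1$ immediately. For the $A^{(1)}_n\times A^{(1)}_n$
generators $p_i,p'_i$ I would conjugate $\tau_c$ and use the cross relations
$p_i\sigma=\sigma\,p_{i+1}$, $p_i\sigma'=\sigma'p_{i-1}$, $p'_i\sigma=\sigma\,p'_i$,
$p'_i\sigma'=\sigma'p'_i$. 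The key cancellation is that conjugation of $p_i$ by
$\sigma$ shifts the index $i\mapsto i+1$ while conjugation by $\sigma'$ shifts it
$i\mapsto i-1$; since $\tau_c=\sigma'\sigma$ combines one of each, the two shifts cancel
and one obtains $\tau_c\,p_i=p_i\,\tau_c$, and similarly $\tau_c\,p'_i=p'_i\,\tau_c$ using
that both $\sigma$ and $\sigma'$ centralise $p'_i$ up to the trivial shift. Because
commutativity with all generators of $\langle F,F'\rangle\rtimes\langle\sigma,\sigma',\pi_1\rangle$
implies commutativity with every element, the first assertion follows.

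For the second assertion I would compute $\pi_2\,\tau_c\,\pi_2^{-1}$ using
$\sigma\pi_2=\pi_2(\sigma')^{-1}$, which rearranges to
$\pi_2^{-1}\sigma\pi_2=(\sigma')^{-1}$ and hence $\pi_2^{-1}\sigma'\pi_2=\sigma^{-1}$
(applying the same relation with the roles of $\sigma,\sigma'$ read off $\pi_2^2=1$).
Then
\[
	\pi_2\,\tau_c\,\pi_2^{-1}
	= \pi_2\,\sigma'\sigma\,\pi_2^{-1}
	= (\pi_2\sigma'\pi_2^{-1})(\pi_2\sigma\pi_2^{-1})
	= \sigma^{-1}(\sigma')^{-1}
	= (\sigma'\sigma)^{-1}
	= \tau_c^{-1},
\]
which is exactly $\pi_2\tau_c=\tau_c^{-1}\pi_2$. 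The only subtlety here is to justify
the passage from $\sigma\pi_2=\pi_2(\sigma')^{-1}$ to the conjugation formula for
$\sigma'$; I expect this to be the main (mild) obstacle, and I would resolve it either
by conjugating the given relation by $\pi_2$ and using $\pi_2^2=1$, or by appealing to
the $\pi_2$-symmetry of the relation set in \eqref{Eq:Dia_Aut_HGE} that interchanges the
two $A^{(1)}_n$ factors. Since every step uses only the listed relations, the argument
is self-contained at the group-theoretic level.
```
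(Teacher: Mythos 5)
Your proposal is correct and follows essentially the same route as the paper: the paper proves the relation $\pi_2\,\tau_c=\tau_c^{-1}\pi_2$ by exactly the computation you give (deriving $\pi_2\,\sigma'=\sigma^{-1}\pi_2$ from $\sigma\,\pi_2=\pi_2\,(\sigma')^{-1}$ and $\pi_2^2=1$), and handles the commutation with the generators of $\langle F,F'\rangle\rtimes\langle\sigma,\sigma',\pi_1\rangle$ by the same index-shift cancellation in the relations \eqref{Eq:Dia_Aut_HGE} that you spell out. Your write-up merely fills in the details the paper leaves as ``similar.''
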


\begin{proof}
The relation $\pi_2\,\tau_c=\tau_c^{-1}\pi_2$ follows from equation \eqref{Eq:Dia_Aut_HGE} as
\begin{align*}
	\pi_2\,\tau_c = \pi_2\,\sigma'\sigma
	= \sigma^{-1}\pi_2\,\sigma
	= \sigma^{-1}(\sigma')^{-1}\pi_2
	= \tau_c^{-1}\pi_2.
\end{align*}
Note that the relation $\sigma\,\pi_2=\pi_2\,(\sigma')^{-1}$ implies that $\pi_2\,\sigma'=\sigma^{-1}\pi_2$.
We can show the other commutation relations in a similar way.
We don't state its detail here.
\end{proof}

This proposition means that if $\varphi_{2j,0}$ $(j\in\mathbb{Z}_{n+1})$ satisfy system \eqref{Eq:q-Ric} with the parameters $\mathbf{c}$, then $\omega(\varphi_{2j,0})$ satisfy system \eqref{Eq:q-Ric} with the transformed parameters $\omega(\mathbf{c})$ for any $\omega\in\widetilde{F}$.
On the other hand, as is seen in Fact \ref{Fac:q-HGE_Sol}, the vector $\mathbf{x}$ provides a solution of system \eqref{Eq:q-Ric}.
Hence we expect that
\begin{equation}\label{Eq:HGE}
	-\frac{x_{j+2}(\omega(\mathbf{c}))}{x_{j+1}(\omega(\mathbf{c}))} = \omega(\varphi_{2j,0})\bigm|_{Eq. \eqref{Eq:q-HGE_Sol}}\quad (j=0,\ldots,n-1),
\end{equation}
for some $\omega\in\widetilde{F}$.
Then the right-hand side of \eqref{Eq:HGE} is a ratio of two linear combinations of the functions $x_1,\ldots,x_{n+1}$.
Based on this conjecture, we define $(n+1)\times(n+1)$ matrices corresponding to the generators of $\widetilde{F}$ by
\begin{align*}
	M_{p_0} &= \sum_{j=1}^{n}\frac{a_0-b_0}{a_1-b_0}\,E_{j,j} + E_{n+1,n+1} + \frac{a_0-a_1}{a_1-b_0}\,q\,c\,E_{n+1,1}, \\
	M_{p_i} &= \sum_{j=1}^{n-i}E_{j,j} + \frac{a_{i+1}-b_i}{a_i-b_i}\,E_{n-i+1,n-i+1} + \sum_{j=n-i+2}^{n+1}E_{j,j} + \frac{a_i-a_{i+1}}{a_i-b_i}\,E_{n-i+1,n-i+2},
\end{align*}
for $i=1,\ldots,n$,
\begin{align*}
	M_{p'_0} &= c^{\log_qb_1}\left( \sum_{j=1}^{n}E_{j,j} + \frac{b_0-a_1}{b_1-a_1}\,E_{n+1,n+1} + \frac{b_1-b_0}{b_1-a_1}\,E_{n+1,n} \right), \\
	M_{p'_i} &= \sum_{j=1}^{n-i}E_{j,j} + \frac{b_i-a_{i+1}}{b_{i+1}-a_{i+1}}\,E_{n-i+1,n-i+1} + \sum_{j=n-i+2}^{n+1}E_{j,j} + \frac{b_{i+1}-b_i}{b_{i+1}-a_{i+1}}\,E_{n-i+1,n-i}, \\
	M_{p'_n} &= c^{\log_qq^{-1}b_n}\left( \frac{b_n-a_{n+1}}{b_{n+1}-a_{n+1}}\,E_{1,1} + \sum_{j=2}^{n+1}E_{j,j} + \frac{b_{n+1}-b_n}{b_{n+1}-a_{n+1}}\,q^{-1}c^{-1}E_{1,n+1} \right),
\end{align*}
for $i=1,\ldots,n-1$ and
\begin{align*}
	M_{\sigma} &= \sum_{j=1}^{n+1}\frac{b_{n-j+1}\,(a_0-b_0)}{a_{n-j+1}-b_{n-j+1}}\,E_{j,j} - \sum_{j=1}^{n}\frac{a_{n-j+1}\,(a_0-b_0)}{a_{n-j+1}-b_{n-j+1}}\,E_{j,j+1} - a_{n+1}\,c\,E_{n+1,1}, \\
	M_{\sigma'} &= \frac{1}{1-c}\left( \sum_{j_1=1}^{n+1}\sum_{j_2=j_1}^{n+1}\frac{a_{n-j_2+2}-b_{n-j_2+1}}{a_1-b_0}\,E_{j_1,j_2} + \sum_{j_1=1}^{n+1}\sum_{j_2=1}^{j_1-1}\frac{a_{n-j_2+2}-b_{n-j_2+1}}{a_1-b_0}\,c\,E_{j_1,j_2} \right), \\
	M_{\pi_1} &= c^{\log_qq^{-1}b_n}\left( \sum_{j=1}^{n}E_{j,j+1} + q\,c\,E_{n+1,1} \right), \\
	M_{\pi_2} &= c^{\log_qq\,a_{n-1}}\left( E_{1,2} + \frac{b_n}{a_n}E_{2,1} + \sum_{j=3}^{n+1}\frac{b_n}{a_n\,a_{n+1}}\prod_{l=1}^{j-3}\frac{b_l}{a_l}\,c^{-1}E_{j,n+4-j} \right).
\end{align*}

\begin{thm}\label{Thm:q-HGE}
For any $\omega\in\langle F,p'_1,\ldots,p'_{n-1}\rangle\rtimes\langle\sigma,\sigma'\rangle$, the vector $\mathbf{x}$ satisfies
\begin{equation}\label{Eq:q-HGE_Wey}
	\mathbf{x}(\omega(\mathbf{c})) = M_{\omega}\,\mathbf{x}(\mathbf{c}).
\end{equation}
Here a composition of transformations is lifted to a product of matrices as
\[
	M_{\omega_2\,\omega_1} = \omega_2(M_{\omega_1})\,M_{\omega_2}.
\]
Besides, for any $\omega=p'_0,p'_n,\pi_1,\pi_2$, the vector $\omega^{-1}(M_{\omega})\,\mathbf{x}(\omega^{-1}(\mathbf{c}))$ is a solution of system \eqref{Eq:q-HGE} and two vectors $\mathbf{x}(\mathbf{c})$ and $\omega^{-1}(M_{\omega})\,\mathbf{x}(\omega^{-1}(\mathbf{c}))$ are linearly independent.
\end{thm}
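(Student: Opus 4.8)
The plan is to prove the two assertions separately, in both cases reducing everything to the generators together with the cocycle rule $M_{\omega_2\omega_1}=\omega_2(M_{\omega_1})\,M_{\omega_2}$. For the first assertion I would begin by observing that this rule is forced rather than imposed. Suppose \eqref{Eq:q-HGE_Wey} holds for $\omega_1$ and $\omega_2$. Applying the automorphism $\omega_2$ to $\mathbf{x}(\omega_1(\mathbf{c}))=M_{\omega_1}\,\mathbf{x}(\mathbf{c})$ and using $\omega_2\bigl(\mathbf{x}(\mathbf{c})\bigr)=\mathbf{x}(\omega_2(\mathbf{c}))$ and $\omega_2\bigl(M_{\omega_1}\bigr)=M_{\omega_1}(\omega_2(\mathbf{c}))$ gives
\[
	\mathbf{x}\bigl(\omega_2\omega_1(\mathbf{c})\bigr)=\omega_2(M_{\omega_1})\,\mathbf{x}(\omega_2(\mathbf{c}))=\omega_2(M_{\omega_1})\,M_{\omega_2}\,\mathbf{x}(\mathbf{c}),
\]
so the matrix attached to $\omega_2\omega_1$ is necessarily $\omega_2(M_{\omega_1})\,M_{\omega_2}$. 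Since $x_1,\dots,x_{n+1}$ are linearly independent (their power series in $c$ have distinct leading behaviour for generic $\mathbf{c}$), the matrix realizing \eqref{Eq:q-HGE_Wey} is unique; hence the assignment is automatically consistent with all relations of $\langle F,p'_1,\dots,p'_{n-1}\rangle\rtimes\langle\sigma,\sigma'\rangle$, and it suffices to verify \eqref{Eq:q-HGE_Wey} on the generators $p_0,\dots,p_n$, $p'_1,\dots,p'_{n-1}$, $\sigma$, $\sigma'$.

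Each such generator acts on $\mathbf{c}$ by a transposition or a shift of the $a$'s or $b$'s, fixing $c$ except for $\sigma'$ which sends $c\mapsto q^{-1}c$. For the interior transpositions $p_1,\dots,p_n$ and $p'_1,\dots,p'_{n-1}$ the swapped pair lies inside a single block of the product defining $x_j$, so most components are invariant by the symmetry of ${}_{n+1}\phi_n$ in its upper (resp.\ lower) parameters, and the single nontrivial component is handled by the elementary identity $(q\,a;q)_k=(a;q)_k\,(1-q^k a)/(1-a)$ together with the splitting $1-q^k a=(1-a)-a(q^k-1)$; this produces exactly the bidiagonal form of $M_{p_i}$, $M_{p'_i}$. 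The boundary generators $p_0$ and $\sigma$ move a parameter across the dividing index $n-j+1$ and therefore require the genuine three-term contiguity relation obtained from the same splitting. The remaining generator $\sigma'$ is then most economically treated through the known factorization $\tau_c=\sigma'\sigma$: the cocycle rule gives $M_{\tau_c}=\sigma'(M_\sigma)\,M_{\sigma'}$, which must reproduce the matrix of Fact \ref{Fac:q-HGE} and thereby pins down $M_{\sigma'}$.

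For the second assertion, note that $p'_0,p'_n,\pi_1,\pi_2$ each combine a permutation with an overall rescaling of the parameters; this rescaling is the source of the prefactors $c^{\log_q(\cdot)}$ and, simultaneously, the reason the naive relation \eqref{Eq:q-HGE_Wey} fails: if it held for such $\omega$, substituting $\mathbf{c}\mapsto\omega^{-1}(\mathbf{c})$ would give $\omega^{-1}(M_\omega)\,\mathbf{x}(\omega^{-1}(\mathbf{c}))=\mathbf{x}(\mathbf{c})$, contradicting independence. Writing $\mathbf{y}=\omega^{-1}(M_\omega)\,\mathbf{x}(\omega^{-1}(\mathbf{c}))$, I would use Proposition \ref{Prop:q-Ric_Sym}: the elements $p'_0,p'_n,\pi_1$ commute with $\tau_c$, while $\pi_2\,\tau_c=\tau_c^{-1}\pi_2$. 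In the commuting cases, substituting $\mathbf{c}\mapsto\omega^{-1}(\mathbf{c})$ in \eqref{Eq:q-HGE} and using $\tau_c\omega^{-1}=\omega^{-1}\tau_c$ shows that $\mathbf{y}$ solves \eqref{Eq:q-HGE} provided the matrix consistency identity
\[
	M_{\omega}\bigl(\tau_c(\mathbf{c})\bigr)\,M_{\tau_c}(\mathbf{c})=M_{\tau_c}\bigl(\omega(\mathbf{c})\bigr)\,M_{\omega}(\mathbf{c})
\]
holds; this is a direct check from the explicit forms of $M_\omega$ and $M_{\tau_c}$, in which the prefactor transforms by the scalar $\tau_c(c^{\log_q\lambda})=\lambda^{-1}c^{\log_q\lambda}$. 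For $\pi_2$ the same computation with $\pi_2\tau_c=\tau_c^{-1}\pi_2$ yields the analogue with $M_{\tau_c}$ replaced by $M_{\tau_c}^{-1}$, after which one notes that a solution of the $\tau_c^{-1}$-system is a solution of \eqref{Eq:q-HGE}.

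Finally, linear independence: if $\mathbf{y}=f(\mathbf{c})\,\mathbf{x}(\mathbf{c})$ for some $\tau_c$-invariant $f$, then comparing the behaviour as $c\to0$ — where $\mathbf{x}$ is holomorphic and nonzero whereas $\mathbf{y}$ carries the factor $c^{\log_q\lambda}$ with $\lambda=b_1,\,q^{-1}b_n,\,q^{-1}b_n,\,q\,a_{n-1}$ respectively — forces $f$ to be proportional to $c^{\log_q\lambda}$, which is impossible for $\tau_c$-invariant $f$ since $\tau_c(c^{\log_q\lambda})=\lambda^{-1}c^{\log_q\lambda}\neq c^{\log_q\lambda}$ when $\lambda$ is not an integer power of $q$, i.e.\ for generic parameters. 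I expect the main obstacle to be the boundary three-term relations underlying $M_{\sigma}$ and $M_{\sigma'}$ and the verification of the matrix consistency identities for $p'_0,p'_n,\pi_1,\pi_2$: these are precisely the places where the full structure of the contiguity relations for ${}_{n+1}\phi_n$, rather than mere parameter symmetry, is genuinely needed, and where the $c^{\log_q(\cdot)}$ prefactors must be tracked with care.
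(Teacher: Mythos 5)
Your proposal is correct in substance and, for most of the statement, follows the same route as the paper: reduce the first claim to the generators via the cocycle rule and verify each generator relation coefficientwise in the $c$-expansion (your symmetry-plus-contiguity treatment of $p_i,p'_i$ and the three-term relations for $p_0,\sigma$ are the same kind of direct verification the paper writes out for $\sigma'$ and declares ``similar'' for the rest), and reduce the second claim, via Proposition \ref{Prop:q-Ric_Sym}, to the intertwining identity $\omega(M_{\tau_c})\,M_{\omega}=\tau_c(M_{\omega})\,M_{\tau_c}$, with $M_{\tau_c}$ replaced by its inverse for $\pi_2$ --- which is the paper's argument verbatim. The one genuine divergence is $\sigma'$: the paper proves $\mathbf{x}(\sigma'(\mathbf{c}))=M_{\sigma'}\mathbf{x}(\mathbf{c})$ directly, splitting $M_{\sigma'}$ into $M_{\sigma',0}+c\,M_{\sigma',1}$ and telescoping (this is the only case it writes out, presumably because $\sigma'$ is the only generator that moves $c$ and has a dense matrix), whereas you obtain it from $M_{\sigma}$ and Fact \ref{Fac:q-HGE} through $\tau_c=\sigma'\sigma$. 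That is admissible if Fact \ref{Fac:q-HGE} is taken as external input --- and you would still need to check that the matrix $\sigma^{-1}(M_{\tau_c})\,\sigma^{-1}(M_{\sigma})^{-1}$ so produced coincides with the displayed $M_{\sigma'}$ --- but the paper explicitly intends the implication to run the other way (Section \ref{Sec:q-HGE} states that Fact \ref{Fac:q-HGE} follows from Theorem \ref{Thm:q-HGE}), so your shortcut would make that deduction circular; a self-contained proof still requires the direct $\sigma'$ computation. Two smaller remarks: your linear-independence argument via the exponent $c^{\log_q\lambda}$ is a useful supplement, since the paper asserts independence without proof, but for $\pi_2$ the second solution is a series at $c=\infty$, so the comparison of local behaviour must be made there rather than at $c=0$; and the uniqueness of $M_{\omega}$ rests on generic linear independence of $x_1,\dots,x_{n+1}$, which should be argued from the coefficient vectors $\mathbf{x}_0,\dots,\mathbf{x}_n$ rather than from ``distinct leading behaviour'' in $c$, since all components begin at order $c^0$.
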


We will prove this theorem in Section \ref{SubSec:q-HGE}.
In addition, we give a lemma which will be used to prove Theorem \ref{Thm:q-HGE_Lau}.

\begin{lem}\label{Lem:q-HGE_Lau_Thm}
The vector $\mathbf{x}$ satisfies
\[
	\mathbf{x}(p'_n\,\pi_1(\mathbf{c})) = \frac{1-a_0}{1-b_n}\,p'_n(M_{\pi_1})\,M_{p'_n}\,\mathbf{x}(\mathbf{c}).
\]
\end{lem}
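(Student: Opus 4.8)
The plan is to avoid comparing the two basic hypergeometric vectors entry by entry, and instead to show that both sides of the asserted identity solve \emph{one and the same} linear $q$-difference system in the variable $c$, after which a uniqueness argument fixes the scalar. Write $A(\mathbf{c})=M_{\tau_c}$ for the coefficient matrix of \eqref{Eq:q-HGE} and put $\nu=p'_n\,\pi_1$. A short computation (using $p'_n(b_n)=q^2/b_n$ together with $b_{n+1}=q$) shows that $\nu$ acts on the parameters as the clean shift $a_l\mapsto a_{l-1}$, $b_l\mapsto b_{l-1}$, $c\mapsto c$. By Proposition \ref{Prop:q-Ric_Sym} both $\pi_1$ and $p'_n$ commute with $\tau_c$, so the ``second solution'' statements of Theorem \ref{Thm:q-HGE} for the bad generators $\omega=\pi_1,p'_n$ are equivalent to the matrix intertwiners $A(\omega(\mathbf{c}))\,M_\omega=\tau_c(M_\omega)\,A(\mathbf{c})$; I would extract these two relations first.

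Next I would compose these intertwiners through the cocycle $M_\nu=p'_n(M_{\pi_1})\,M_{p'_n}$. Applying the automorphism $p'_n$ to the $\pi_1$-intertwiner (and using $p'_n\,\tau_c=\tau_c\,p'_n$ and $p'_n(A(\mathbf{c}))=A(p'_n(\mathbf{c}))$), then multiplying on the right by the $p'_n$-intertwiner, yields $A(\nu(\mathbf{c}))\,M_\nu=\tau_c(M_\nu)\,A(\mathbf{c})$. Consequently $M_\nu\,\mathbf{x}(\mathbf{c})$ solves \eqref{Eq:q-HGE} with the parameters $\nu(\mathbf{c})$, and so does $\mathbf{x}(\nu(\mathbf{c}))$ by Fact \ref{Fac:q-HGE} (since $\nu$ and $\tau_c$ commute on the parameters). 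Hence $\mathbf{x}(\nu(\mathbf{c}))$ and the right-hand side $\tfrac{1-a_0}{1-b_n}\,M_\nu\,\mathbf{x}(\mathbf{c})$ are two solutions of the same first-order system in $c$.

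It remains to show these two solutions coincide, and here the essential point is that the right-hand side is holomorphic at $c=0$ with trivial exponent, just like $\mathbf{x}(\nu(\mathbf{c}))$. This is precisely where the multivalued prefactors cancel: $p'_n$ sends the exponent $\log_q q^{-1}b_n$ of $M_{\pi_1}$ to $1-\log_q b_n$, whereas $M_{p'_n}$ carries $\log_q q^{-1}b_n=\log_q b_n-1$, so the product $M_\nu=p'_n(M_{\pi_1})\,M_{p'_n}$ has trivial $c$-exponent and is rational in the parameters. (This is the mirror image of Theorem \ref{Thm:q-HGE}: each of $\pi_1$ and $p'_n$ alone shifts the exponent and yields a solution linearly independent from $\mathbf{x}$, but their composite restores exponent $0$.) I would then verify that no pole at $c=0$ survives from the $q^{-1}c^{-1}E_{1,n+1}$ entry of $M_{p'_n}$; granting this, $M_\nu\,\mathbf{x}(\mathbf{c})$ is a holomorphic power-series solution, hence a constant multiple of $\mathbf{x}(\nu(\mathbf{c}))$ (for generic $b_l$ the exponent $0$ is simple, and special parameters follow by continuity). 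Finally I would match the $k=0$ terms of one convenient component to evaluate that constant. The hard part is this last normalization: the exponent cancellation is immediate, but extracting the precise scalar $\tfrac{1-a_0}{1-b_n}$ requires tracking the boundary factors $(1-a_0)$ and $(1-b_n)$ produced by the index shift against the entries of $p'_n(M_{\pi_1})\,M_{p'_n}$, and this, together with checking holomorphy at $c=0$, is where the genuine bookkeeping lies.
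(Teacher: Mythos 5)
Your proposal is correct in outline, but it takes a genuinely different route from the paper. The paper's (sketched) proof of Lemma \ref{Lem:q-HGE_Lau_Thm} is the same direct verification used for $\sigma'$ in Section \ref{SubSec:q-HGE}: expand $\mathbf{x}=\sum_k\mathbf{x}_kc^k$ and check the resulting coefficient recursions (contiguity relations) entry by entry. You instead compose the matrix intertwiners $\omega(M_{\tau_c})\,M_\omega=\tau_c(M_\omega)\,M_{\tau_c}$ for $\omega=\pi_1,p'_n$ to get $\nu(M_{\tau_c})\,M_\nu=\tau_c(M_\nu)\,M_{\tau_c}$ for $\nu=p'_n\,\pi_1$, observe that the $c$-exponents of $p'_n(M_{\pi_1})$ and $M_{p'_n}$ cancel (indeed $M_\nu=\sum_{j=1}^{n}E_{j,j+1}+\tfrac{q-b_n}{q-a_{n+1}}E_{n+1,n+1}+q\,c\,\tfrac{b_n-a_{n+1}}{q-a_{n+1}}E_{n+1,1}$, holomorphic at $c=0$), and invoke uniqueness of the exponent-$0$ holomorphic solution at $c=0$; I checked that matching the constant terms in the last component does produce exactly $\tfrac{1-a_0}{1-b_n}$. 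Your approach buys a conceptual reduction to finite matrix identities already needed for Theorem \ref{Thm:q-HGE}, at the cost of a genericity-plus-continuity argument; the paper's approach is uniform in the parameters and self-contained. Two cautions. First, you need the \emph{matrix} intertwiners, which come from the proof of Theorem \ref{Thm:q-HGE}, not from its statement: the statement only says that $M_\omega\,\mathbf{x}(\mathbf{c})$ solves the system at $\omega(\mathbf{c})$, and since $M_{p'_n}\,\mathbf{x}(\mathbf{c})$ is \emph{not} $\mathbf{x}(p'_n(\mathbf{c}))$ (they are linearly independent), the vector version cannot be composed; so "equivalent" is too strong, though the needed identities are indeed established there. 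Second, your "clean shift" description of $\nu$ is wrong at the boundary: while $\nu(a_l)=a_{l-1}$ throughout and $\nu(b_l)=b_{l-1}$ for $l=2,\dots,n$, one has $\nu(b_1)=q^{-1}b_n$ (not $b_0=1$) and $\nu(b_{n+1})=b_{n+1}$. Taking $b_1\mapsto b_0=1$ literally would make $x_{0,n+1}(\nu(\mathbf{c}))=\prod_{l}(1-\nu(b_l))$ vanish and wreck the normalization; the correct value $1-q^{-1}b_n=(q-b_n)/q$ is precisely what pairs with the entry $\tfrac{q-b_n}{q-a_{n+1}}$ of $M_\nu$ to give the scalar $\tfrac{1-a_0}{1-b_n}$. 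You flag this boundary bookkeeping as the hard part, which is accurate, but the descriptive claim as stated needs the correction above.
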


This lemma can be proved in a similar way as Theorem \ref{Thm:q-HGE}.

\begin{rem}
Thanks to Theorem \ref{Thm:q-HGE}, we obtain the solution space of system \eqref{Eq:q-HGE} at $c=0$.
Besides, the vector $\pi_2^{-1}(M_{\pi_2})\,\mathbf{x}(\pi_2^{-1}(\mathbf{c}))$ is a solution of system \eqref{Eq:q-HGE} at $c=\infty$.
It also converges if we set $|b_1\ldots b_n|<|a_1\ldots a_{n+1}|$.
\end{rem}

\subsection{Proof of Theorem \ref{Thm:q-HGE}}\label{SubSec:q-HGE}

We first prove equation \eqref{Eq:q-HGE_Wey} for the transformation $\sigma'$.
Let $\mathbf{x}_k=\mathbf{x}_k(\mathbf{c})$ $(k\geq0)$ be $(n+1)$-dimensional vectors defined by
\[
	\mathbf{x}_k = \begin{bmatrix}x_{k,1}\\\vdots\\x_{k,n+1}\end{bmatrix},\quad
	x_{k,j} = q^k\prod_{l=1}^{n-j+1}(1-a_l)\prod_{l=n-j+2}^{n}(1-b_l)\prod_{l=1}^{n-j+1}\frac{(q\,a_l;q)_k}{(q\,b_l;q)_k}\prod_{l=n-j+2}^{n+1}\frac{(a_l;q)_k}{(b_l;q)_k}.
\]
Although the vectors $\mathbf{x}_k$ are independent of the parameter $c$, we use this notation for a sake of convenience.
Also let $M_{\sigma',0},M_{\sigma',1}$ be $(n+1)\times(n+1)$ matrices define by
\[
	M_{\sigma',0} = \sum_{j_1=1}^{n+1}\sum_{j_2=j_1}^{n+1}\frac{a_{n-j_2+2}-b_{n-j_2+1}}{a_1-b_0}\,E_{j_1,j_2},\quad
	M_{\sigma',1} = \sum_{j_1=1}^{n+1}\sum_{j_2=1}^{j_1-1}\frac{a_{n-j_2+2}-b_{n-j_2+1}}{a_1-b_0}\,E_{j_1,j_2}.
\]
Note that
\[
	\mathbf{x} = \sum_{k=0}^{\infty}\mathbf{x}_k\,c^k,\quad
	(1-c)\,M_{\sigma'} = M_{\sigma',0}+c\,M_{\sigma',1}.
\]
By using those notations, we can rewrite our goal $\mathbf{x}(\sigma'(\mathbf{c}))=M_{\sigma'}\,\mathbf{x}(\mathbf{c})$ into
\begin{align}
	&\mathbf{x}_0(\sigma'(\mathbf{c})) = M_{\sigma',0}\,\mathbf{x}_0(\sigma'(\mathbf{c})), \label{Eq:q-HGE_Wey_Proof_1} \\
	&q^{-k}\,\mathbf{x}_k(\sigma'(\mathbf{c})) = q^{-k+1}\,\mathbf{x}_{k-1}(\sigma'(\mathbf{c})) + M_{\sigma',0}\,\mathbf{x}_k(\mathbf{c}) + M_{\sigma',1}\,\mathbf{x}_{k-1}(\mathbf{c})\quad (k\geq1). \label{Eq:q-HGE_Wey_Proof_2}
\end{align}
Equation \eqref{Eq:q-HGE_Wey_Proof_1} is shown as follows.
The $j$-th row of the vector $M_{\sigma',0}\,\mathbf{x}_0(\sigma'(\mathbf{c}))$ is rewritten as
\begin{align*}
	&\sum_{j'=j}^{n+1}\frac{a_{n-j'+2}-b_{n-j'+1}}{a_1-b_0}\,x_{0,j'}(\mathbf{c}) \\
	&= \sum_{j'=j}^{n+1}\frac{a_{n-j'+2}-b_{n-j'+1}}{a_1-b_0}\prod_{l=1}^{n-j'+1}(1-a_l)\prod_{l=n-j'+2}^{n}(1-b_l) \\
	&= \sum_{j'=j}^{n}\prod_{l=1}^{n-j'+1}(1-a_{l+1})\prod_{l=n-j'+2}^{n}(1-b_l) - \sum_{j'=j}^{n}\prod_{l=1}^{n-j'}(1-a_{l+1})\prod_{l=n-j'+1}^{n}(1-b_l) + \prod_{l=1}^{n}(1-b_l) \\
	&= \prod_{l=1}^{n-j+1}(1-a_{l+1})\prod_{l=n-j+2}^{n}(1-b_l) \\
	&= x_{0,j}(\sigma'(\mathbf{c})).
\end{align*}
Equation \eqref{Eq:q-HGE_Wey_Proof_2} is shown as follows.
Its $j$-th row is described as
\[
	q^{-k}x_{k,j}(\sigma'(\mathbf{c})) = q^{-k+1}x_{k-1,j}(\sigma'(\mathbf{c})) + \sum_{j'=1}^{j-1}\frac{a_{n-j'+2}-b_{n-j'+1}}{a_1-b_0}\,x_{k-1,j'}(\mathbf{c}) + \sum_{j'=j}^{n+1}\frac{a_{n-j'+2}-b_{n-j'+1}}{a_1-b_0}\,x_{k,j'}(\mathbf{c}).
\]
Then we have
\[
	q^{-k+1}x_{k-1,j'+1}(\sigma'(\mathbf{c})) + \frac{a_{n-j'+2}-b_{n-j'+1}}{a_1-b_0}\,x_{k-1,j'}(\mathbf{c}) = q^{-k+1}x_{k-1,j'}(\sigma'(\mathbf{c}))\quad (j'=1,\ldots,j-1).
\]
It follows that
\[
	q^{-k+1}x_{k-1,j}(\sigma'(\mathbf{c})) + \sum_{j'=1}^{j-1}\frac{a_{n-j'+2}-b_{n-j'+1}}{a_1-b_0}\,x_{k-1,j'}(\mathbf{c}) = q^{-k+1}x_{k-1,1}(\sigma'(\mathbf{c})).
\]
Moreover, we have
\[
	q^{-k+1}x_{k-1,1}(\sigma'(\mathbf{c})) + x_{k,n+1}(\mathbf{c}) = q^{-k}x_{k,n+1}(\sigma'(\mathbf{c})),
\]
and
\[
	q^{-k}x_{k,j'+1}(\sigma'(\mathbf{c})) + \frac{a_{n-j'+2}-b_{n-j'+1}}{a_1-b_0}\,x_{k,j'}(\mathbf{c}) = q^{-k}x_{k,j'}(\sigma'(\mathbf{c}))\quad (j'=j,\ldots,n).
\]
Hence we obtain
\[
	q^{-k+1}x_{k-1,1}(\sigma'(\mathbf{c})) + \sum_{j'=j}^{n+1}\frac{a_{n-j'+2}-b_{n-j'+1}}{a_1-b_0}\,x_{k,j'}(\mathbf{c}) = q^{-k}x_{k,j}(\sigma'(\mathbf{c})).
\]

For the other transformations $p_0,\ldots,p_n,p'_1,\ldots,p'_{n-1},\sigma$, we can show in a similar way.
We don't state its detail here.

We next prove the latter half of the theorem for the transformation $\pi_2$.
We can rewrite our goal
\[
	\tau_c\,\pi_2^{-1}(M_{\pi_2})\,\mathbf{x}(\tau_c\,\pi_2^{-1}(\mathbf{c})) = M_{\tau_c}\,\pi_2^{-1}(M_{\pi_2})\,\mathbf{x}(\pi_2^{-1}(\mathbf{c})),
\]
into
\[
	\mathbf{x}(\tau_c^{-1}(\mathbf{c})) = \tau_c^{-1}(M_{\pi_2}^{-1})\,\pi_2(M_{\tau_c})\,M_{\pi_2}\,\mathbf{x}(\mathbf{c}),
\]
by using the fundamental relation $\pi_2^{-1}=\pi_2$ and Proposition \ref{Prop:q-Ric_Sym}.
Thus it is enough to show that
\[
	\tau_c^{-1}(M_{\tau_c}^{-1}) = \tau_c^{-1}(M_{\pi_2}^{-1})\,\pi_2(M_{\tau_c})\,M_{\pi_2}.
\]
Its right-hand side is expressed as
\begin{align*}
	(1-\pi_2(c))\,\tau_c^{-1}(M_{\pi_2}^{-1})\,\pi_2(M_{\tau_c})\,M_{\pi_2} &= \sum_{j=1}^{n+1}\left(\frac{1}{a_{n-j+2}}-\frac{\pi_2(c)}{b_{n-j+1}}\right)E_{j,j} \\
	&\quad + \sum_{j_2=1}^{n+1}\sum_{j_1=j_2+1}^{n+1}\left(\frac{1}{a_{n-j_2+2}}-\frac{1}{b_{n-j_2+1}}\right)\prod_{l=n-j_1+2}^{n-j_2+1}\frac{b_l}{a_l}\,E_{j_1,j_2} \\
	&\quad + \sum_{j_2=1}^{n+1}\sum_{j_1=1}^{j_2-1}\left(\frac{1}{a_{n-j_2+2}}-\frac{1}{b_{n-j_2+1}}\right)\prod_{l=n-j_2+2}^{n-j_1+1}\frac{a_l}{b_l}\,\pi_2(c)\,E_{j_1,j_2}.
\end{align*}
By using this expression, we can show
\[
	\tau_c^{-1}(M_{\tau_c})\,\tau_c^{-1}(M_{\pi_2}^{-1})\,\pi_2(M_{\tau_c})\,M_{\pi_2} = I.
\]

For the other transformations $\omega=p'_0,p'_n,\pi_1$, our goal is rewritten into
\[
	\omega(M_{\tau_c})\,M_{\omega} = \tau_c(M_{\omega})\,M_{\tau_c}.
\]
It can be shown by a direct calculation.
We don't state its detail here.

\section{$q$-Hypergeometric equations as translations}\label{Sec:q-HGE}

\subsection{Results of this section}

The group $\widetilde{F}$ contains three types of translations
\begin{align*}
	\tau_c &= \sigma'\sigma, \\
	\tau_i &= p_i\,p_{i+1}\ldots p_{i+n-1}\,\sigma\quad (i=1,\ldots,n+1), \\
	\tau_{i,j} &= p_i\,p_{i+1}\ldots p_{i+n-1}\,p'_j\,p'_{j+1}\ldots p'_n\,\pi_1\,p'_1\,p'_2\ldots p'_{j-1}\quad (i,j=1,\ldots,n+1),
\end{align*}
which act on the parameters $\mathbf{c}$ as
\begin{align*}
	&\tau_c(a_k) = a_k,\quad
	\tau_c(b_k) = b_k,\quad
	\tau_c(c) = q^{-1}c, \\
	&\tau_i(a_k) = q^{-\delta_{i,k}}a_k,\quad
	\tau_i(b_k) = b_k,\quad
	\tau_i(c) = c, \\
	&\tau_{i,j}(a_k) = q^{\delta_{j,n+1}-\delta_{i,k}}a_k,\quad
	\tau_{i,j}(b_k) = q^{\delta_{j,n+1}-\delta_{j,k}}b_k,\quad
	\tau_{i,j}(c) = c,
\end{align*}
where $\delta_{i,k}$ is the Kronecker's delta.
They generate an abelian normal subgroup $\widetilde{T}$ of $\widetilde{F}$ with relations
\begin{align*}
	&p_k\,\tau_i = \tau_{(k,k+1)(i)}\,p_k,\quad
	p'_k\,\tau_i = \tau_i\,p'_k, \\
	&\sigma\,\tau_i = \tau_{i-1}\,\sigma,\quad
	\sigma'\tau_i = \tau_{i+1}\,\sigma',\quad
	\pi_1\,\tau_i = \tau_{i-1}\,\pi_1,\quad
	\pi_2\,\tau_i = \tau_{i,-i-2}^{-1}\,\tau_i\,\tau_c^{-1}\pi_2,\quad \\
	&p_k\,\tau_{i,j} = \tau_{(k,k+1)(i),j}\,p_k,\quad
	p'_k\,\tau_{i,j} = \tau_{i,(k,k+1)(j)}\,p'_k, \\
	&\sigma\,\tau_{i,j} = \tau_{i-1,j}\,\sigma,\quad
	\sigma'\tau_{i,j} = \tau_{i+1,j}\,\sigma',\quad
	\pi_1\,\tau_{i,j} = \tau_{i-1,j-1}\,\pi_1,\quad
	\pi_2\,\tau_{i,j} = \tau_{-j-2,-i-2}^{-1}\,\pi_2,
\end{align*}
where $(k,k+1)(i)$ is the action of the permutation $(k,k+1)$ on the number $i$.
Recall that the translation $\tau_c$ is commutative with any element of $\langle F,F'\rangle\rtimes\langle\sigma,\sigma',\pi_1\rangle$ and satisfies $\pi_2\,\tau_c=\tau_c^{-1}\pi_2$; see Proposition \ref{Prop:q-Ric_Sym}.
Note that the extended affine Weyl group excepting $\pi_2$ is decomposed into semi-direct product of the group $\widetilde{T}$ and a finite Weyl group as
\[
	\langle F,F'\rangle\rtimes\langle\sigma,\sigma',\pi_1\rangle = \widetilde{T}\rtimes\langle p_1,\ldots,p_n,p'_1,\ldots,p'_n\rangle
\]

Among elements of $\widetilde{T}$, the translation $\tau_c$ was investigated in the previous sections.
As a matter of fact, the corresponding matrix is described as
\[
	M_{\tau_c} = \sigma'(M_{\sigma})\,M_{\sigma'},
\]
and Fact \ref{Fac:q-HGE} follows from Theorem \ref{Thm:q-HGE}.
Thus we investigate the other translations and formulate their corresponding matrices in this section.

Let $M_{\tau_i}$ $(i=1,\ldots,n+1)$ and $M_{\tau_{i,j}}$ $(i,j=1,\ldots,n+1)$ be $(n+1)\times(n+1)$ matrices defined by
\[
	M_{\tau_i} = p_i\ldots p_{i+n-1}(M_{\sigma})\times p_i\ldots p_{i+n-2}(M_{p_{i+n-1}})\times\ldots\times p_i(M_{p_{i+1}})\times M_{p_i},
\]
and
\begin{align*}
	\Delta_{i,j}\,M_{\tau_{i,j}} &= p_i\ldots p_{i+n-1}\,p'_j\ldots p'_n\,\pi_1\,p'_1\ldots p'_{j-2}(M_{p'_{j-1}})\times\ldots\times p_i\ldots p_{i+n-1}\,p'_j\ldots p'_n\,\pi_1(M_{p'_1}) \\
	&\quad\times p_i\ldots p_{i+n-1}\,p'_j\ldots p'_n(M_{\pi_1})\times p_i\ldots p_{i+n-1}\,p'_j\ldots p'_{n-1}(M_{p'_n})\times\ldots\times p_i\ldots p_{i+n-1}(M_{p'_j}) \\
	&\quad\times p_i\ldots p_{i+n-2}(M_{p_{i+n-1}})\times\ldots\times p_i(M_{p_{i+1}})\times M_{p_i},
\end{align*}
where
\begin{align*}
	&\Delta_{i,j} = \frac{1-b_j}{1-a_1}\quad (i\neq1,\ j\neq n+1),\quad
	\Delta_{1,j} = \frac{1-b_j}{1-q^{-1}a_1}\quad (j\neq n+1), \\
	&\Delta_{i,n+1} = q\,\frac{1-q^{-1}a_i}{1-a_i}\prod_{l=1}^{n}\frac{1-a_{l+1}}{1-q\,b_l}\quad (i\neq1),\quad
	\Delta_{1,n+1} = q\prod_{l=1}^{n}\frac{1-a_{l+1}}{1-q\,b_l}.
\end{align*}

\begin{thm}\label{Thm:q-HGE_Lau}
The vector $\mathbf{x}$ satisfies systems of $q$-difference equations
\begin{align}
	&\mathbf{x}(\tau_i(\mathbf{c})) = M_{\tau_i}\,\mathbf{x}(\mathbf{c})\quad (i=1,\ldots,n+1), \label{Eq:q-HGE_Lau1}\\
	&\mathbf{x}(\tau_{i,j}(\mathbf{c})) = M_{\tau_{i,j}}\,\mathbf{x}(\mathbf{c})\quad (i,j=1,\ldots,n+1). \label{Eq:q-HGE_Lau2}
\end{align}
\end{thm}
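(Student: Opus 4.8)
The plan is to reduce both families \eqref{Eq:q-HGE_Lau1}--\eqref{Eq:q-HGE_Lau2} to the generator-level relations already in hand, namely the clean relations of Theorem \ref{Thm:q-HGE} for $p_0,\ldots,p_n,p'_1,\ldots,p'_{n-1},\sigma,\sigma'$ together with the single combined relation of Lemma \ref{Lem:q-HGE_Lau_Thm} for the block $p'_n\pi_1$, and then to propagate these along the defining words of $\tau_i$ and $\tau_{i,j}$ by means of the cocycle rule $M_{\omega_2\omega_1}=\omega_2(M_{\omega_1})M_{\omega_2}$. Since $M_{\tau_i}$ and $M_{\tau_{i,j}}$ are themselves \emph{defined} as such cocycle products, the only real work is to verify that each letter of the word contributes its stated matrix and to absorb the lone scalar discrepancy coming from $p'_n\pi_1$ into the normalization $\Delta_{i,j}$.

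First I would dispose of \eqref{Eq:q-HGE_Lau1}. The word $\tau_i=p_i\cdots p_{i+n-1}\sigma$ involves only transformations for which Theorem \ref{Thm:q-HGE} supplies $\mathbf{x}(\omega(\mathbf{c}))=M_\omega\,\mathbf{x}(\mathbf{c})$, so iterating the cocycle rule from the right immediately yields $\mathbf{x}(\tau_i(\mathbf{c}))=M_{\tau_i}\,\mathbf{x}(\mathbf{c})$ with precisely the product defining $M_{\tau_i}$, and no scalar correction appears.

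For \eqref{Eq:q-HGE_Lau2} with $j\neq n+1$, the word $p_i\cdots p_{i+n-1}\,p'_j\cdots p'_n\,\pi_1\,p'_1\cdots p'_{j-1}$ contains $\pi_1$ and $p'_n$ only inside the adjacent block $p'_n\pi_1$, every other letter being covered by Theorem \ref{Thm:q-HGE}. Because the prefix of $\pi_1$ equals the prefix of $p'_n$ followed by $p'_n$, the two cocycle factors attached to these letters collapse into $P(M_U)$ with $M_U=p'_n(M_{\pi_1})M_{p'_n}$ and $P=p_i\cdots p_{i+n-1}p'_j\cdots p'_{n-1}$, while Lemma \ref{Lem:q-HGE_Lau_Thm} shows the true relation carries the extra scalar $\tfrac{1-a_0}{1-b_n}$. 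Transporting this scalar along its prefix leaves $\mathbf{x}(\tau_{i,j}(\mathbf{c}))=P\!\left(\tfrac{1-a_0}{1-b_n}\right)\Delta_{i,j}M_{\tau_{i,j}}\,\mathbf{x}(\mathbf{c})$, so it remains to check $P\!\left(\tfrac{1-b_n}{1-a_0}\right)=\Delta_{i,j}$. As the $p_k$ fix every $b$ and shift the $a$-indices cyclically while the $p'_k$ with $k\le n-1$ fix every $a$ and permute the $b$-indices, a short computation gives $P(b_n)=b_j$ and $P(a_0)=a_1$ (respectively $q^{-1}a_1$ for $i=1$, produced by the affine seam $a_{n+1}=q\,a_0$), reproducing $\Delta_{i,j}=\tfrac{1-b_j}{1-a_1}$ (respectively $\tfrac{1-b_j}{1-q^{-1}a_1}$) exactly.

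The main obstacle is the case $j=n+1$, where the block $p'_j\cdots p'_n$ is empty and the word reads $p_i\cdots p_{i+n-1}\,\pi_1\,p'_1\cdots p'_n$, so that $\pi_1$ and $p'_n$ now sit at opposite ends of $\pi_1 p'_1\cdots p'_n$ and Lemma \ref{Lem:q-HGE_Lau_Thm} does not apply directly. Here I would use the commutation relations $p'_i\pi_1=\pi_1 p'_{i+1}$ from \eqref{Eq:Dia_Aut_HGE} to slide $\pi_1$ rightward, rewriting $\pi_1 p'_1\cdots p'_n=p'_0 p'_1\cdots p'_{n-1}\pi_1$, and then reassemble a $p'_n\pi_1$ block using $(p'_n)^2=1$, so that the lemma becomes usable at the price of a residual $p'_0$ together with the $c^{\log_q(\cdots)}$ prefactors carried by $M_{p'_0}$, $M_{p'_n}$ and $M_{\pi_1}$. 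The hard part will be exactly this bookkeeping: one must verify that those fractional $c$-powers recombine to an integer power and that the transported scalar, now dragged through the entire chain $p'_1\cdots p'_n$ rather than through a short prefix, assembles into the intricate product $\Delta_{i,n+1}=q\,\tfrac{1-q^{-1}a_i}{1-a_i}\prod_{l=1}^{n}\tfrac{1-a_{l+1}}{1-q\,b_l}$ (with the $i=1$ simplification). Once this identity is confirmed, the argument closes just as in the generic case.
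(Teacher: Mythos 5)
Your treatment of \eqref{Eq:q-HGE_Lau1} and of \eqref{Eq:q-HGE_Lau2} for $j\neq n+1$ is correct and is essentially the paper's own argument: those words decompose into letters covered by Theorem \ref{Thm:q-HGE} plus a single adjacent block $p'_n\pi_1$ covered by Lemma \ref{Lem:q-HGE_Lau_Thm}, and your identification $P(b_n)=b_j$, $P(a_0)=a_1$ (resp.\ $q^{-1}a_1$ for $i=1$), hence $P\bigl(\tfrac{1-b_n}{1-a_0}\bigr)=\Delta_{i,j}$, is exactly the scalar bookkeeping the paper leaves implicit.

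The case $j=n+1$ is where your plan has a genuine gap. Rewriting $\pi_1 p'_1\cdots p'_n=p'_0p'_1\cdots p'_{n-1}\pi_1$ and inserting $(p'_n)^2$ yields the word $p'_0\,(p'_1\cdots p'_{n-1})\,p'_n\,(p'_n\pi_1)$, in which $p'_0$ and one copy of $p'_n$ survive as isolated letters not adjacent to any $\pi_1$. For these letters there is no relation of the form $\mathbf{x}(\omega(\mathbf{c}))=\lambda\,M_\omega\,\mathbf{x}(\mathbf{c})$ to transport: the second half of Theorem \ref{Thm:q-HGE} asserts precisely that for $\omega=p'_0,p'_n$ the vector $\omega^{-1}(M_\omega)\,\mathbf{x}(\omega^{-1}(\mathbf{c}))$ is a solution of \eqref{Eq:q-HGE} \emph{linearly independent} of $\mathbf{x}(\mathbf{c})$, which rules out any scalar relation. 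So the obstacle is not the bookkeeping of the $c^{\log_q(\cdot)}$ prefactors; the generator-level relations you intend to iterate simply do not exist for this rewritten word. The device you are missing is the group identity $\pi_1p'_1\cdots p'_n=(\pi_1^{-1}p'_n)^n$ (obtained by iterating $p'_i\pi_1=\pi_1 p'_{i+1}$ together with $\pi_1^{n+1}=1$), which ensures that the ``bad'' letters only ever occur in the combination $\pi_1^{-1}p'_n$, for which a valid scalar relation does follow from Lemma \ref{Lem:q-HGE_Lau_Thm} and the involutivity $\widetilde{M}_{p'_n}^{-1}=p'_n(\widetilde{M}_{p'_n})$ of the rescaled matrix. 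The resulting $n$-fold product is then reorganized into $\Delta_{i,n+1}^{-1}$ times the defining cocycle product for $M_{\tau_{i,n+1}}$ by means of the matrix identities $\widetilde{M}_{\pi_1}^{n+1}=q\,c\,I$ and $\widetilde{M}_{\pi_1}\widetilde{M}_{p'_i}=\pi_1(\widetilde{M}_{p'_{i+1}})\,\widetilde{M}_{\pi_1}$. To complete your proof you would need to replace the slide-and-reinsert step by this (or an equivalent) reduction to powers of $\pi_1^{-1}p'_n$, and to verify those two matrix identities by direct computation.
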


We will prove this theorem in Section \ref{SubSec:q-HGE_Lau}.
Systems \eqref{Eq:q-HGE_Lau1} and \eqref{Eq:q-HGE_Lau2} are equivalent to the $q$-contiguity relations for ${}_{n+1}\phi_n$.
Besides, system \eqref{Eq:q-HGE_Lau2} can be regarded as a $q$-analogue of the linear Pfaff system whose solution is described in terms of $F_D$.

\begin{cor}\label{Cor:q-HGE_Lau}
Let $\mathbf{y}=\mathbf{y}(\mathbf{c})$ be an $(n+1)$-dimensional vector defined by
\[
	\mathbf{y} = \begin{bmatrix}y_1\\\vdots\\y_{n+1}\end{bmatrix},\quad
	y_j = \phi^{(n)}_D\left[\begin{array}{c}q\,c,a_1^{-1}b_1,\ldots,a_n^{-1}b_n\\q\,a_{n+1}\,c\end{array};q\,a_1,\ldots,q\,a_{n-j+1},a_{n-j+2},\ldots,a_n\,\right].
\]
Then the vector $\mathbf{y}$ satisfies a system of $q$-difference equations
\begin{align*}
	\mathbf{y}(\tau_{i,j}(\mathbf{c})) &= \frac{1-b_j}{1-q^{-1}a_i}\,M_{\tau_{i,j}}\,\mathbf{y}(\mathbf{c})\quad (i,j=1,\ldots,n), \\
	\mathbf{y}(\tau_{n+1,j}(\mathbf{c})) &= \frac{1-b_j}{1-a_{n+1}\,c}\,M_{\tau_{n+1,j}}\,\mathbf{y}(\mathbf{c})\quad (j=1,\ldots,n), \\
	\mathbf{y}(\tau_{i,n+1}(\mathbf{c})) &= \frac{(1-q\,a_{n+1}\,c)\prod_{l=1}^{n}(1-a_l)}{(1-a_i)\prod_{l=1}^{n}(1-q\,b_l)}\,M_{\tau_{i,n+1}}\,\mathbf{y}(\mathbf{c})\quad (i=1,\ldots,n), \\
	\mathbf{y}(\tau_{n+1,n+1}(\mathbf{c})) &= \frac{\prod_{l=1}^{n}(1-a_l)}{\prod_{l=1}^{n}(1-q\,b_l)}\,M_{\tau_{n+1,n+1}}\,\mathbf{y}(\mathbf{c}).
\end{align*}
\end{cor}

We can show this corollary by a direct calculation or formally by using a relation between two vectors
\[
	\mathbf{y} = \frac{(q\,c;q)_{\infty}\prod_{l=1}^{n}(q\,b_l;q)_{\infty}}{(q\,a_{n+1}\,c;q)_{\infty}\prod_{l=1}^{n}(a_l;q)_{\infty}}\,\mathbf{x},
\]
which follows from the transformation formula between ${}_{n+1}\phi_n$ and $\phi^{(n)}_D$.

\begin{rem}\label{Rem:q-Gar}
If we regard the translations $\tau_{i,j}$ $(i,j=1,\ldots,n+1)$ as elements of $\widetilde{G}$, then they coincide with the original $q$-Garnier system given in \cite{Sak1}.
On the other hand, the original $q$-Garnier system admits a particular solution in terms of $\phi^{(n)}_D$; see \cite{NY1,Sak2}.
These facts are consistent with Corollary \ref{Cor:q-HGE_Lau}.
\end{rem}

\subsection{Proof of Theorem \ref{Thm:q-HGE_Lau}}\label{SubSec:q-HGE_Lau}

Since system \eqref{Eq:q-HGE_Lau1} follows from Theorem \ref{Thm:q-HGE}, we prove system \eqref{Eq:q-HGE_Lau2} here.
For a sake of convenience, we set 
\begin{align*}
	&\widetilde{M}_{p'_0} = c^{-\log_qb_1}M_{p'_0},\quad
	\widetilde{M}_{p'_i} = M_{p'_i}\quad (i=1,\ldots,n-1),\quad
	\widetilde{M}_{p'_n} = c^{-\log_qq^{-1}b_n}M_{p'_n}, \\
	&\widetilde{M}_{\pi_1} = c^{-\log_qq^{-1}b_n}M_{\pi_1}.
\end{align*}
Note that $\widetilde{M}_{\pi_1}$ is invariant under the actions of $p'_0,\ldots,p'_n,\pi_1$.
Then Lemma \ref{Lem:q-HGE_Lau_Thm} implies
\begin{equation}\label{Eq:q-HGE_Lau_Thm1}
	\mathbf{x}(p'_n\,\pi_1(\mathbf{c})) = \frac{1-a_0}{1-b_n}\,\widetilde{M}_{\pi_1}\,\widetilde{M}_{p'_n}\,\mathbf{x}(\mathbf{c}).
\end{equation}
Moreover, we obtain
\begin{align}
	&\widetilde{M}_{p'_i}^{-1} = p'_i(\widetilde{M}_{p'_i})\quad (i=0,\ldots,n), \label{Eq:q-HGE_Lau_Thm2} \\
	&\widetilde{M}_{\pi_1}^{n+1} = q\,c\,I, \label{Eq:q-HGE_Lau_Thm3} \\
	&\widetilde{M}_{\pi_1}\,\widetilde{M}_{p'_i} = \pi_1(\widetilde{M}_{p'_{i+1}})\,\widetilde{M}_{\pi_1}\quad (i=0,\ldots,n), \label{Eq:q-HGE_Lau_Thm4}
\end{align}
where $I$ is the identity matrix, by a direct calculation.

For $j=1,\ldots,n$, system \eqref{Eq:q-HGE_Lau2} follows from Theorem \ref{Thm:q-HGE} and equation \eqref{Eq:q-HGE_Lau_Thm1}.
For $j=n+1$, system \eqref{Eq:q-HGE_Lau2} is derived as follows.
The fundamental relations for $\widetilde{F}$ imply
\begin{align*}
	(\pi_1^{-1}p'_n)^l\,\pi_1^{l+1} &= (\pi_1^{-1}p'_n)^{l-1}\,\pi_1^{-1}p'_n\,\pi_1^{l+1} \\
	&= (\pi_1^{-1}p'_n)^{l-1}\,\pi_1^l\,p'_l \\
	&= \ldots \\
	&= (\pi_1^{-1}p'_n)\,\pi_1^2\,p'_2\ldots p'_l \\
	&= \pi_1\,p'_1\,p'_2\ldots p'_l,
\end{align*}
for $l=1,\ldots,n$.
Equations \eqref{Eq:q-HGE_Lau_Thm1} and \eqref{Eq:q-HGE_Lau_Thm2} imply
\[
	\mathbf{x}(\pi_1^{-1}p'_n(\mathbf{c})) = \frac{1-q\,b_1}{1-a_1}\,\pi_1^{-1}(\widetilde{M}_{p'_n})\,\widetilde{M}_{\pi_1}^{-1}\,\mathbf{x}(\mathbf{c}).
\]
Equation \eqref{Eq:q-HGE_Lau_Thm4} implies
\[
	\widetilde{M}_{\pi_1}^{k+1}\,(\pi_1^{-1}p'_n)^{k-1}\pi_1^{-1}(\widetilde{M}_{p'_n})\,\widetilde{M}_{\pi_1}^{-1} = (\pi_1^{-1}p'_n)^{k-1}\pi_1^k(\widetilde{M}_{p'_k})\,\widetilde{M}_{\pi_1}^k\quad (k=1,\ldots,n).
\]
Combining them and equation \eqref{Eq:q-HGE_Lau_Thm3}, we obtain
\begin{align*}
	&\mathbf{x}(\pi_1\,p'_1\,p'_2\ldots p'_n(\mathbf{c})) \\
	&= \mathbf{x}((\pi_1^{-1}p'_n)^n(\mathbf{c})) \\
	&= \prod_{l=1}^{n}\frac{1-q\,b_l}{1-a_l}\,(\pi_1^{-1}p'_n)^{n-1}\pi_1^{-1}(\widetilde{M}_{p'_n})\,\widetilde{M}_{\pi_1}^{-1}\ldots(\pi_1^{-1}p'_n)\,\pi_1^{-1}(\widetilde{M}_{p'_n})\,\widetilde{M}_{\pi_1}^{-1}\,\pi_1^{-1}(\widetilde{M}_{p'_n})\,\widetilde{M}_{\pi_1}^{-1}\,\mathbf{x}(\mathbf{c}) \\
	&= q^{-1}c^{-1}\prod_{l=1}^{n}\frac{1-q\,b_l}{1-a_l}\,\widetilde{M}_{\pi_1}^{n+1}\,(\pi_1^{-1}p'_n)^{n-1}\pi_1^{-1}(\widetilde{M}_{p'_n})\,\widetilde{M}_{\pi_1}^{-1}\ldots(\pi_1^{-1}p'_n)\,\pi_1^{-1}(\widetilde{M}_{p'_n})\,\widetilde{M}_{\pi_1}^{-1}\,\pi_1^{-1}(\widetilde{M}_{p'_n})\,\widetilde{M}_{\pi_1}^{-1}\,\mathbf{x}(\mathbf{c}) \\
	&= q^{-1}c^{-1}\prod_{l=1}^{n}\frac{1-q\,b_l}{1-a_l}\,(\pi_1^{-1}p'_n)^{n-1}\pi_1^n(\widetilde{M}_{p'_n})\ldots(\pi_1^{-1}p'_n)\,\pi_1^2(\widetilde{M}_{p'_2})\,\pi_1(\widetilde{M}_{p'_1})\,\widetilde{M}_{\pi_1}\,\mathbf{x}(\mathbf{c}) \\
	&= q^{-1}c^{-1}\prod_{l=1}^{n}\frac{1-q\,b_l}{1-a_l}\,\pi_1\,p'_1\ldots p'_{n-1}(\widetilde{M}_{p'_n})\ldots\pi_1\,p'_1(\widetilde{M}_{p'_2})\,\pi_1(\widetilde{M}_{p'_1})\,\widetilde{M}_{\pi_1}\,\mathbf{x}(\mathbf{c}).
\end{align*}
It follows that
\begin{equation}\label{Eq:q-HGE_Lau_Thm5}
	\mathbf{x}(\pi_1\,p'_1\,p'_2\ldots p'_n(\mathbf{c})) = q^{-1}\prod_{l=1}^{n}\frac{1-q\,b_l}{1-a_l}\,\pi_1\,p'_1\ldots p'_{n-1}(M_{p'_n})\ldots\pi_1\,p'_1(M_{p'_2})\,\pi_1(M_{p'_1})\,M_{\pi_1}\,\mathbf{x}(\mathbf{c}).
\end{equation}
Then we obtain system \eqref{Eq:q-HGE_Lau2} from equation \eqref{Eq:q-HGE_Lau_Thm5} and Theorem \ref{Thm:q-HGE}.

\section*{Acknowledgement}

This work was supported by JSPS KAKENHI Grant Number 20K03645.


\end{document}